\theoremstyle{definition}
\newtheorem{dfn}{Definition}[section]
\newtheorem{thm}{Theorem}[section]
\newtheorem{lem}{Lemma}[section]
\newtheorem{prop}{Proposition}[section]
\newtheorem{cor}{Corollary}[section]
\newtheorem*{main*}{Main result}
\theoremstyle{definition}
\newtheorem{case}{Example}[section]
\newtheorem*{prop*}{proposition}
\numberwithin{equation}{section}
\newtheorem{rem}{Remark}[section]
\theoremstyle{plain}
\renewcommand{\AB@affilsep}{\quad\protect\Affilfont}
\let\AB@affilsepx\AB@affilsep
\begin{document}

\title{Staircase hook-length ratios and special values of Jacobi polynomials}
\author{Tatsushi Shimazaki\thanks{Department of Mathematics, Graduate School of Science, Kobe University}}
\date{}

\maketitle

\def\thefootnote{\fnsymbol{footnote}}
\footnote[0]{%
\begin{tabular}{@{}ll}
E-mail: & \texttt{tsimazak@math.kobe-u.ac.jp} \\
Keywords: & Jacobi polynomials, Grothendieck polynomials, set-valued tableaux. \\
MSC2020: & 05E05, 05A19, 33C45.
\end{tabular}}
\def\thefootnote{\arabic{footnote}}

\begin{abstract}
We relate hook-length products for adjacent staircase partitions to special values of Jacobi polynomials. 
This connection expresses the number of semistandard tableaux in terms of Jacobi polynomials defined via Gauss hypergeometric functions. 
From this identity, we derive the special values of stable Grothendieck polynomials and $K$-theoretic Schur $P$-functions indexed by adjacent staircase partitions. 
These values provide ratios of the numbers of set-valued and shifted set-valued semistandard tableaux. 
This connection is further clarified by the theory of excited Young diagrams, which characterizes the coefficients in these specializations.
\end{abstract}

\vspace{3mm}
\noindent {\bf Keywords:} hook-length formula, Jacobi polynomials, Grothendieck polynomials, $K$-theoretic Schur $P$-functions, set-valued tableaux.

\vspace{2mm}
\noindent {\bf MSC2020:} 05E05, 05A19, 33C45.

\section{Introduction}\label{intro}

The hook-length formula, established by Frame, Robinson, and Thrall~\cite{FRT}, is a fundamental result in algebraic combinatorics. 
It provides a product formula for the number of standard Young tableaux. 
This formula admits a number of generalizations, such as $q$-analogues and product formulas for skew shapes~\cite{MPP1, MPP2, MPP3}.
Recent studies extended these results to $K$-theoretic analogues.
Throughout the paper we work with staircase shapes.
Among all partitions, staircase shapes arise from their symmetry and from their role in representation theory and basic hypergeometric series.
Geometrically, they correspond to Schubert varieties associated with the longest element in the Weyl group or maximal isotropic Grassmannians.
Analyzing these shapes leads to an understanding of the singularities of general Schubert varieties and the properties of their structure sheaves.

In $K$-theory, Schur polynomials are generalized to Grothendieck polynomials. 
The stable Grothendieck polynomial represents the structure sheaves of Schubert varieties in flag varieties~\cite{Lascoux,LS}. 
This geometric interpretation links algebraic combinatorics with the $K$-theory of flag varieties. 
The combinatorial aspects of Grothendieck polynomials also relate to integrable systems and vertex models \cite{MotegiSakai}.
To describe the coefficients of these polynomials, Buch~\cite{Buch} introduced set-valued semistandard Young tableaux. 
These tableaux provide the combinatorial expression for the stable Grothendieck polynomial.
A shifted analogue exists for the symplectic and orthogonal types.
Ikeda and Naruse~\cite{IkedaNaruse} introduced $K$-theoretic factorial Schur $P$- and $Q$-functions. 
These polynomials represent the structure sheaves of Schubert varieties in the equivariant $K$-theory of maximal isotropic Grassmannians. 
These functions also possess a combinatorial description in terms of excited Young diagrams~\cite{IkedaNaruse2009, IkedaNaruse}.

In this paper, we show that ratios of hook-length products for staircase partitions are given by specializations of Jacobi polynomials.
Specifically, our main results are as follows:

\begin{enumerate}
    \item We prove the ratio of the number of semistandard tableaux for adjacent staircase partitions is given by a specialization of Jacobi polynomials (Theorem~\ref{thm:main}).
    \item We derive explicit recursive relations for staircase ratios in terms of stable Grothendieck polynomials and $K$-theoretic Schur $P$-functions (Propositions~\ref{stairG} and \ref{stairGP}).
    \item We relate the Jacobi polynomial evaluations to the appearance of powers of $3$ in shifted tableau enumeration via excited Young diagrams (Proposition~\ref{prop:three-power-count}).
\end{enumerate}

Jacobi polynomials yield a uniform expression for the hook-length ratios in both even and odd cases.
This relates the counting problem to classical special functions and the recursive structure in the $K$-theoretic setting.

This paper is organized as follows.
In Section~\ref{pre}, we give the definitions used throughout the paper. 
Section~\ref{main} presents the main result on the relation between hook-length ratios and Jacobi polynomials.
 In Section~\ref{Gro}, we apply the main result to stable Grothendieck polynomials and $K$-theoretic Schur $P$-functions.
Section~\ref{exc} is devoted to the combinatorial interpretation via excited Young diagrams.
Finally, we summarize the results obtained in this paper in Section~\ref{con}.

\section{Preliminaries}\label{pre}

This section provides the basic definitions of partitions, hook lengths, and the special functions appearing in our main results.

\subsection{Partitions, Young diagrams, and hook lengths}

A \emph{partition} of a non-negative integer $l$ is a finite sequence of non-negative integers
$\lambda=(\lambda_1,\lambda_2,\ldots,\lambda_r)$ such that $\lambda_1 \geq \lambda_2 \geq \cdots \geq \lambda_r$ and $\lambda_1+\cdots+\lambda_r=l$.
We identify $(\lambda_1,\dots,\lambda_r)$ with $(\lambda_1,\dots,\lambda_r,0)\ (r \in \mathbb{Z}_{>0})$.
The \emph{length} of $\lambda$, denoted $\ell(\lambda)$, is defined as the smallest integer $r$ such that $\lambda_r > 0$ and $\lambda_{r+1} = 0$.
The \emph{Young diagram} ${\rm YD}_\lambda$ of $\lambda$ is the set
\begin{align*}
{\rm YD}_\lambda := \{ (i,j) \in (\mathbb{Z}_{>0})^2 \mid 1\leq i \leq \ell(\lambda),\ 1 \leq j \leq \lambda_i \}.
\end{align*}
We call an element $(i,j)$ of $\lambda$ a box. 
The Young diagram of $\lambda$ consists of $|\lambda| = \sum_i \lambda_i$ boxes arranged in left-justified rows, where the $i$-th row contains $\lambda_i$ boxes.
 Following the English convention, we label the box in the $i$-th row and $j$-th column by $(i,j)$, where $i$ and $j$ increase downward and to the right, respectively.
In this paper, each partition is regarded as its Young diagram.

Let $\lambda$ and $\nu$ be Young diagrams with $\lambda \supset \nu$. 
The set-theoretic difference $\lambda - \nu$ is called a \emph{skew Young diagram}, denoted $\lambda/\nu$. 
It consists of the boxes in $\lambda$ that are not in $\nu$. 

For each box $u=(i,j) \in \lambda$, the \textit{hook length} $h(u)$ is defined as the number of boxes to its right in the same row, plus the number of boxes below it in the same column, plus one for the box $u$ itself. 
Specifically, if $\lambda'$ denotes the conjugate partition of $\lambda$, then
\begin{align*}
h(u) = (\lambda_i - j) + (\lambda'_j - i) + 1.
\end{align*}
The \textit{hook length product} of $\lambda$ is defined as 
\begin{align*}
H_\lambda = \prod_{u \in \lambda} h(u).
\end{align*}

A particularly symmetric family of partitions is the \textit{staircase partition} $\delta_k = (k-1, k-2, \dots, 1)$ for $k \in \mathbb{Z}_{>0}$. 
Since $\delta_k$ is self-conjugate, its hook length at $u=(i,j) \in \delta_k$ is given by the simple formula:
\begin{align}\label{eq:Hk}
h(u) = 2(k-i-j)+1.
\end{align}
We denote the hook length product of $\delta_k$ by $H_k$. From \eqref{eq:Hk}, it follows that
\begin{align}\label{eq:Hk_formula}
H_k = \prod_{u \in \delta_k} h(u) = \prod_{i=1}^{k-1} \prod_{j=1}^{k-i} \bigl(2(k-i-j)+1\bigr).
\end{align}

\begin{case}
For $\delta_4=(3,2,1)$, the corresponding Young diagram is\vspace{2mm}
\begin{align*}
{\raisebox{-5.5pt}[0pt][0pt]{$\delta_4 =\ $}} {\raisebox{-0pt}[0pt][0pt]{\ytableaushort{\ \ \ ,\ \ ,\ }}}.\\[5mm]
\end{align*}
The hook length product is obtained from~\eqref{eq:Hk_formula}
\begin{align*}
H_4 = 5\times3\times1\times3\times1\times1 = 45.
\end{align*}
\end{case}

\subsection{Gauss hypergeometric function}

The \textit{Gauss hypergeometric function} is defined by the power series
\begin{align}\label{hgf}
{}_{2}F_{1}\!\left( \begin{matrix} a, b \\ c \end{matrix} ; z \right) 
= \sum_{s=0}^\infty \frac{(a)_s (b)_s}{(c)_s s!} z^s,
\end{align}
for $c \notin \mathbb{Z}_{\le0}$. 
Here $(\alpha)_s$ denotes the \textit{Pochhammer symbol}: $(\alpha)_0=1$ and $(\alpha)_s = \alpha(\alpha+1)\cdots(\alpha+s-1)$ for $s\ge1$.
The series converges absolutely for $|z|<1$.
In this paper, we consider its analytic continuation to the complex plane with a cut along $[1,\infty)$.
When at least one of the parameters $a$ or $b$ is a non-positive integer, the series terminates as a polynomial in $z$, and the function is well-defined for all $z \in \mathbb{C}$.
The value at $z=1$ is given by the Gauss summation formula (also known as the Chu--Vandermonde identity):
\begin{align}\label{eq:vandermonde}
{}_{2}F_{1}\!\left( \begin{matrix} a, b \\ c \end{matrix} ; 1 \right) 
= \frac{\Gamma(c)\Gamma(c-a-b)}{\Gamma(c-a)\Gamma(c-b)},
\end{align}
valid whenever the series terminates or $\operatorname{Re}(c-a-b)>0$.

\subsection{Jacobi polynomials}

The \textit{Jacobi polynomial} $P_k^{(\alpha, \beta)}(z)$ is a classical orthogonal polynomial.
For a non-negative integer $k$, it is defined by the hypergeometric representation
\begin{align}\label{eq:jacobi_hgf_def}
P_k^{(\alpha, \beta)}(z)
= \frac{(\alpha+1)_k}{k!}\,
{}_2F_1\!\left(
\begin{matrix}
-k,\; k+\alpha+\beta+1\\
\alpha+1
\end{matrix}
;\frac{1-z}{2}
\right).
\end{align}
Here $z$ denotes the same variable as in the Gauss hypergeometric function.
Standard references for the properties of these polynomials and related hypergeometric functions include \cite{AAR1999, Slater}.
Even if $\alpha+1$ is a non-positive integer, the polynomial is still given by the finite expansion
\begin{align}\label{eq:jacobi_sum_def}
P_k^{(\alpha, \beta)}(z)
= \sum_{r=0}^k
\binom{k+\alpha}{k-r}
\binom{k+\beta}{r}
\left(\frac{z-1}{2}\right)^r
\left(\frac{z+1}{2}\right)^{k-r},
\end{align}
which defines a polynomial for all complex parameters.
Although the condition $\alpha, \beta > -1$ is often assumed for orthogonality, we treat $P_k^{(\alpha, \beta)}(z)$ as a polynomial defined for all $\alpha, \beta \in \mathbb{C}$ via \eqref{eq:jacobi_sum_def}.
This polynomial is classically orthogonal on the interval $z \in [-1,1]$ with respect to an explicit weight function.
It also satisfies the symmetry relation
\begin{align*}
P_k^{(\alpha, \beta)}(-z) = (-1)^k P_k^{(\beta, \alpha)}(z).
\end{align*}
This symmetry shows that the specialization at $z=-1$ may equivalently be interpreted as an evaluation at $z=1$ after interchanging the parameters $(\alpha,\beta)$.

In Section~\ref{main}, we show that the ratio of semistandard Young tableau counts for staircase shapes is characterized by the specialization $P_k^{(\alpha, \beta)}(-1)$.
Note that at $z=-1$, the argument of the hypergeometric representation~\eqref{eq:jacobi_hgf_def} becomes $1$.
This formulation remains valid for non-positive integer parameters via the expansion~\eqref{eq:jacobi_sum_def} or an appropriate limiting procedure.

\section{A hypergeometric identity for semistandard tableaux}\label{main}

Our main theorem connects the ratio of hook length products of adjacent staircase partitions to the Jacobi polynomial. 
In what follows, we use the term \textit{semistandard tableaux} as an abbreviation for semistandard Young tableaux. 
The number of semistandard tableaux of shape $\lambda$ with entries in $[n] \coloneqq \{1, \dots, n\}$ is given by the \textit{hook-content formula}~\cite{Macdonald,Noumi}:
\begin{align}\label{hook}
|{\rm SST}(\lambda, n)| = \prod_{u \in \lambda} \frac{n + \operatorname{ct}(u)}{h(u)},
\end{align}
where the content of each box $u=(i,j)$ is defined by $\operatorname{ct}(u) \coloneqq j-i$.
Thus $|{\rm SST}(\lambda,n)|$ is given in a factorized form and is readily computable.

To prove our main result, we first evaluate the algebraic components required for the ratio of these counts.
Let $C_k(n)$ denote the product of the shifted contents of the boxes in the skew shape $\delta_{k+1}/\delta_k$:
\begin{align}\label{eq:Pk_def}
C_k(n) \coloneqq \prod_{u\in\delta_{k+1}/\delta_k} (n+\operatorname{ct}(u)).
\end{align}
The following lemma is needed to obtain a closed form expression for the ratios appearing in the main theorem.

\begin{lem}\label{lem:algebraic_components}
Let $\delta_k=(k-1,k-2,\dots,1)$ be the staircase partition. For positive integers $k$ and $n$ with $n\ge k$, the following hold.
\begin{enumerate}
\item The ratio of the hook-product for adjacent staircase partitions is
\begin{align}\label{eq:hook_ratio}
\frac{H_k}{H_{k+1}} = \frac{1}{(2k-1)!!}.
\end{align}
\item 
\begin{itemize}
\item If $k=2m$ is even, then
\begin{align}\label{eq:Pk_even}
C_{2m}(n) = 4^m \Bigl(\frac{n+1}{2}-m\Bigr)_m\Bigl(\frac{n+1}{2}\Bigr)_m.
\end{align}
\item If $k=2m+1$ is odd, then
\begin{align}\label{eq:Pk_odd}
C_{2m+1}(n) = n \times 4^m \Bigl(\frac{n}{2}-m\Bigr)_m\Bigl(\frac{n}{2}+1\Bigr)_m.
\end{align}
\end{itemize}
\end{enumerate}
\end{lem}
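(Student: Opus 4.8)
The plan is to reduce both parts to elementary manipulations of products of linear factors: part (1) exploits the self-conjugacy of staircases to collapse the hook product into double factorials, while part (2) hinges on identifying the skew shape $\delta_{k+1}/\delta_k$ explicitly.

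For part (1), I would begin from the explicit form \eqref{eq:Hk_formula}. Fixing a row index $i$ and substituting $p=k-i-j$ in the inner product, the factors $2(k-i-j)+1$ run through $1,3,5,\dots,2(k-i)-1$ as $j$ decreases, so the inner product collapses to the double factorial $(2(k-i)-1)!!$. Reindexing the outer product by $p=k-i$ then gives the compact expression $H_k=\prod_{p=1}^{k-1}(2p-1)!!$, and the same computation yields $H_{k+1}=\prod_{p=1}^{k}(2p-1)!!$. Dividing immediately produces $H_k/H_{k+1}=1/(2k-1)!!$, as claimed.

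For part (2), the first task is to describe the skew shape. Comparing row lengths, $\delta_{k+1}$ has row $i$ of length $k+1-i$ while $\delta_k$ has row $i$ of length $k-i$, so each row contributes exactly one box and $\delta_{k+1}/\delta_k$ is precisely the antidiagonal $\{(i,k+1-i): 1\le i\le k\}$. The content of $(i,k+1-i)$ equals $(k+1-i)-i=k+1-2i$, so the shifted contents range over the symmetric set $\{n+(k-1),\,n+(k-3),\,\dots,\,n-(k-1)\}$ and $C_k(n)=\prod_{i=1}^{k}(n+k+1-2i)$. Since this is an identity of polynomials in $n$, the hypothesis $n\ge k$ plays no role here.

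It then remains to rewrite this product as shifted factorials, which is the only delicate step. When $k=2m$ the contents are the odd integers $\pm1,\pm3,\dots,\pm(2m-1)$, so I would group the factors into pairs $(n+\ell)(n-\ell)$ and rescale each linear factor as $n\pm(2j-1)=2\bigl(\tfrac{n\pm(2j-1)}{2}\bigr)$; the ascending and descending halves then assemble into $\left(\frac{n+1}{2}\right)_m$ and $\left(\frac{n+1}{2}-m\right)_m$, while the accumulated $2^{2m}=4^m$ becomes the prefactor, giving \eqref{eq:Pk_even}. When $k=2m+1$ the content $0$ contributes an isolated factor $n$, the remaining contents are the nonzero even integers $\pm2,\dots,\pm2m$, and the identical rescaling produces $\left(\frac{n}{2}+1\right)_m$ and $\left(\frac{n}{2}-m\right)_m$ together with $n\cdot4^m$, yielding \eqref{eq:Pk_odd}. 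The main obstacle is purely bookkeeping—aligning the ascending Pochhammer ranges with the descending content values without an off-by-one slip—rather than any conceptual difficulty.
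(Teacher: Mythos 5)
Your proposal is correct and follows essentially the same route as the paper: identifying $\delta_{k+1}/\delta_k$ as the antidiagonal, reading off contents $k+1-2i$, and pairing $(n+\ell)(n-\ell)$ factors before rescaling into Pochhammer symbols, exactly as in the paper's proof of \eqref{eq:Pk_even} and \eqref{eq:Pk_odd}. The only cosmetic difference is in part (1), where you evaluate $H_k=\prod_{p=1}^{k-1}(2p-1)!!$ in closed form and divide, whereas the paper observes that the hook multiset of $\delta_{k+1}$ is that of $\delta_k$ together with the first-column hooks $\{1,3,\dots,2k-1\}$; both arguments are immediate from \eqref{eq:Hk}.
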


\begin{proof}
To prove (1), we first characterize the skew shape $\delta_{k+1}/\delta_k$. Since $\delta_k=(k-1,k-2,\dots,1)$ and $\delta_{k+1}=(k,k-1,\dots,1)$, the skew diagram consists of exactly $k$ boxes:
\begin{align*}
\delta_{k+1}/\delta_k = \{(i,k+1-i) \mid i=1,2,\dots,k\}.
\end{align*}
Recall that for $\delta_k$, the hook length at box $u$ is given by \eqref{eq:Hk}. 
Note that the multiset of hook lengths of $\delta_{k+1}$ is the union of those of $\delta_k$ and the hook lengths in the first column of $\delta_{k+1}$. 
These are precisely the odd integers $\{1, 3, 5, \dots, 2k-1\}$. It follows that
\begin{align*}
\frac{H_{k+1}}{H_k} = \prod_{j=1}^k (2j-1) = (2k-1)!!,
\end{align*}
establishing \eqref{eq:hook_ratio}.

For (2), the content of the box $(i,k+1-i) \in \delta_{k+1}/\delta_k$ is $\operatorname{ct}(i,k+1-i) = (k+1-i)-i = k+1-2i$. Thus, by definition \eqref{eq:Pk_def}, we have
\begin{align}\label{eq:Pk_raw}
C_k(n) = \prod_{i=1}^{k} (n+k+1-2i).
\end{align}
Let $c_i = k+1-2i$. As $i$ ranges from $1$ to $k$, the values $\{c_i\}$ run through the integers $\{k-1, k-3, \dots, -(k-1)\}$.

If $k=2m$, the set $\{c_i\}$ consists of the odd integers $\{\pm 1, \pm 3, \dots, \pm(2m-1)\}$, and we obtain
\begin{align*}
C_{2m}(n) = \prod_{t=1}^{m} (n^2-(2t-1)^2) = \prod_{t=1}^{m} (n-(2t-1))(n+(2t-1)).
\end{align*}
The Pochhammer form follows by
\begin{align*}
&\prod_{t=1}^{m}\left(n-(2t-1)\right)
=2^m\prod_{t=1}^{m}\left(\frac{n+1}{2}-t\right)
=2^m\left(\frac{n+1}{2}-m\right)_m,\\
&\prod_{t=1}^{m}\left(n+(2t-1)\right)
=2^m\prod_{t=1}^{m}\left(\frac{n+1}{2}+(t-1)\right)
=2^m\left(\frac{n+1}{2}\right)_m.
\end{align*}
Multiplying these two identities yields~\eqref{eq:Pk_even}.

If $k=2m+1$, then the set of contents $\{c_i\}$ consists of the even integers $\{\pm 2, \pm 4, \dots, \pm 2m\}$ together with $0$. 
It follows that
\begin{align*}
C_{2m+1}(n)
= n \times \prod_{t=1}^{m}\bigl(n^2-(2t)^2\bigr)
= n \times \prod_{t=1}^{m} (n-2t)(n+2t).
\end{align*}
To derive the Pochhammer representation, we have
\begin{align*}
\prod_{t=1}^{m}(n-2t) &= 2^m \prod_{t=1}^{m}\Bigl(\frac{n}{2}-t\Bigr)
= 2^m \Bigl(\frac{n}{2}-m\Bigr)_m, \\
\prod_{t=1}^{m}(n+2t) &= 2^m \prod_{t=1}^{m}\Bigl(\frac{n}{2}+t\Bigr)
= 2^m \Bigl(\frac{n}{2}+1\Bigr)_m.
\end{align*}
Combining these with the central factor $n$ leads to \eqref{eq:Pk_odd}.
\end{proof}

For $a, b, c \in \mathbb{C}$, the Gauss hypergeometric function at $z=1$ is given by~\eqref{eq:vandermonde}.
When $c$ is a non-positive integer $c_0 \in \mathbb{Z}_{\le 0}$, the function is defined by the following regularized limit:
\begin{align}\label{eq:reg_def}
{}_{2}F_{1}\!\left( \begin{matrix} a, b \\ c_0 \end{matrix} ; 1 \right) \coloneqq \lim_{\varepsilon \to 0} {}_{2}F_{1}\!\left( \begin{matrix} a, b \\ c_{0} + \varepsilon \end{matrix} ; 1 \right),
\end{align}
where $0 < |\varepsilon| \ll 1$.

\begin{lem}\label{lem:gauss_regularization}
Let $c_0 \in \mathbb{Z}_{\le 0}$ and $a, b \in \mathbb{C}$. 
The regularized value defined in~\eqref{eq:reg_def} exists and is non-zero if and only if the following two conditions hold:
\begin{enumerate}
    \item $c_0 - a - b \in \mathbb{Z}_{\ge 0}$,
    \item $\{c_0 - a, c_0 - b\} \cap \mathbb{Z}_{\le 0}$ contains exactly one element.
\end{enumerate}
\end{lem}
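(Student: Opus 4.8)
The plan is to reduce the regularized value to a limit of a ratio of four Gamma functions, and then read off the answer from the pole--zero structure of $\Gamma$. For $0<|\varepsilon|\ll1$ the shifted parameter $c_0+\varepsilon$ is not a non-positive integer, so the Gauss summation formula \eqref{eq:vandermonde} applies (literally as a finite sum whenever the series terminates, and otherwise as the analytic continuation in the parameters, both sides being meromorphic). Hence the quantity inside the limit in \eqref{eq:reg_def} is
\begin{align*}
{}_{2}F_{1}\!\left( \begin{matrix} a, b \\ c_0+\varepsilon \end{matrix} ; 1 \right)
= \frac{\Gamma(c_0+\varepsilon)\,\Gamma(c_0+\varepsilon-a-b)}{\Gamma(c_0+\varepsilon-a)\,\Gamma(c_0+\varepsilon-b)}.
\end{align*}
I will use the standard facts that $\Gamma$ is meromorphic with simple poles exactly at $\mathbb{Z}_{\le0}$ and no zeros, so that $1/\Gamma$ is entire with simple zeros exactly at $\mathbb{Z}_{\le0}$, and that the residue of $\Gamma$ at $-n$ equals $(-1)^n/n!\neq0$.

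Next I would carry out an order count in $\varepsilon$. A numerator factor $\Gamma(x_0+\varepsilon)$ has order $-1$ if $x_0\in\mathbb{Z}_{\le0}$ and order $0$ otherwise, while a denominator factor $\Gamma(x_0+\varepsilon)$ contributes order $+1$ if $x_0\in\mathbb{Z}_{\le0}$ and $0$ otherwise. Setting $\alpha=1$ if $c_0-a\in\mathbb{Z}_{\le0}$ (and $\alpha=0$ otherwise), and likewise $\beta$ for $c_0-b$ and $\gamma$ for $c_0-a-b$, and using that $c_0\in\mathbb{Z}_{\le0}$ always forces a pole of $\Gamma(c_0+\varepsilon)$, the total order of the ratio at $\varepsilon=0$ is
\begin{align*}
\operatorname{ord}_\varepsilon = -1-\gamma+\alpha+\beta .
\end{align*}
The limit is finite precisely when $\operatorname{ord}_\varepsilon\ge0$ and non-zero precisely when $\operatorname{ord}_\varepsilon\le0$, so a finite non-zero value occurs if and only if $\operatorname{ord}_\varepsilon=0$, i.e. $\alpha+\beta-\gamma=1$. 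When this holds, the limit equals the product of the four leading coefficients, each a nonzero residue $(-1)^n/n!$ or a nonzero value of $\Gamma$ or $1/\Gamma$; hence the limit is automatically non-zero and no separate argument is needed that the leading coefficient survives.

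Finally I would translate $\alpha+\beta-\gamma=1$ into the stated integrality conditions. The persistent pole of $\Gamma(c_0+\varepsilon)$ must be cancelled by a compensating denominator zero, which is exactly what condition~(2) supplies by requiring precisely one of $c_0-a,\,c_0-b$ to lie in $\mathbb{Z}_{\le0}$, giving $\alpha+\beta=1$; condition~(1), $c_0-a-b\in\mathbb{Z}_{\ge0}$, is meant to guarantee that the numerator factor $\Gamma(c_0+\varepsilon-a-b)$ introduces no further pole, i.e. $\gamma=0$. The main obstacle is precisely this last translation at the boundary: since $0$ lies in both $\mathbb{Z}_{\ge0}$ and $\mathbb{Z}_{\le0}$, the case $c_0-a-b=0$ must be handled on its own, and one must also track multiplicity when $c_0-a=c_0-b$ (that is, $a=b$), where a single denominator factor yields a double zero and condition~(2) must be read as a statement about the \emph{set} $\{c_0-a,c_0-b\}$. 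Pinning down these degenerate configurations is where the delicate bookkeeping lies; the generic balance $\alpha=1,\ \beta=0,\ \gamma=0$ (together with its mirror) falls out immediately from the order count.
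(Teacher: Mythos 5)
Your overall strategy---apply the Gauss summation formula to the shifted parameter $c_0+\varepsilon$ and count pole orders of the four Gamma factors---is exactly the paper's, and your order count $\operatorname{ord}_\varepsilon=-1-\gamma+\alpha+\beta$ is a cleaner packaging of the paper's case analysis. The problem is that your proof stops before the step that carries the actual content of the lemma, and that step cannot be completed in the way you sketch. The criterion $\alpha+\beta-\gamma=1$ is \emph{not} equivalent to conditions (1) and (2): it is also satisfied by $(\alpha,\beta,\gamma)=(1,1,1)$, i.e.\ when $c_0-a$, $c_0-b$, and $c_0-a-b$ all lie in $\mathbb{Z}_{\le 0}$. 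Concretely, for $c_0=0$, $a=1$, $b=2$ one finds $\Gamma(\varepsilon)\Gamma(\varepsilon-3)/\bigl(\Gamma(\varepsilon-1)\Gamma(\varepsilon-2)\bigr)\to 1/3$, a finite non-zero limit of the Gamma ratio, while both conditions of the lemma fail. In the other direction, conditions (1) and (2) are compatible with $c_0-a-b=0$, where $\gamma=1$, $\operatorname{ord}_\varepsilon=-1$, and the ratio diverges. So the pure order count, taken at face value, yields a statement that disagrees with the lemma in both directions; the translation you defer as ``delicate bookkeeping'' is precisely where the if-and-only-if is decided, and the generic balance you do establish does not suffice.

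The source of the discrepancy is the identification of ${}_2F_1(a,b;c_0+\varepsilon;1)$ with the Gamma ratio ``by analytic continuation in the parameters.'' The paper's \eqref{eq:vandermonde} is asserted only when the series terminates or $\operatorname{Re}(c-a-b)>0$; in the configuration $c_0=0$, $a=1$, $b=2$ neither holds, the function genuinely diverges as $z\to 1$, and the Gamma ratio is not its value, which is why the lemma can exclude that case even though the ratio has a finite limit. Closing the gap therefore requires sorting the parameter configurations by whether the series terminates (and, in the terminating case, analyzing the finite sum directly, since $(c_0)_s$ may vanish) before the pole count is invoked. You correctly flag the boundary $c_0-a-b=0$ and the multiplicity issue when $a=b$, but flagging them is not resolving them, so the proposal as written does not establish the lemma. (For fairness: the paper's own proof glosses over the same boundary, asserting that condition (1) makes $\Gamma(c_\varepsilon-a-b)$ holomorphic, which fails when $c_0-a-b=0$; in the application to Theorem~\ref{thm:main} one has $b=0$ and the series terminates trivially, so none of these degenerate cases arise there.)
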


\begin{proof}
Assume conditions (1) and (2) hold. For $c_\varepsilon \coloneqq c_0 + \varepsilon$, the Gauss summation formula~\eqref{eq:vandermonde} gives
\begin{align}\label{eq:Gauss_eps_ratio_v2}
{}_{2}F_{1}\!\left( \begin{matrix} a, b \\ c_{\varepsilon} \end{matrix} ; 1 \right) = \frac{\Gamma(c_\varepsilon)\Gamma(c_\varepsilon - a - b)}{\Gamma(c_\varepsilon - a)\Gamma(c_\varepsilon - b)}.
\end{align}
By condition (1), $\Gamma(c_\varepsilon - a - b)$ is holomorphic and finite at $\varepsilon = 0$, and hence non-zero.
Condition (2) implies that exactly one factor in the denominator has a simple pole at $\varepsilon = 0$. 
Without loss of generality, assume that this factor is $\Gamma(c_\varepsilon - b)$.
In this case, $\Gamma(c_\varepsilon - a)$ is holomorphic and finite at $\varepsilon = 0$ by the same condition.
Thus the poles in the numerator and denominator cancel, so that the limit \eqref{eq:reg_def} exists and is non-zero.

Conversely, suppose that the limit \eqref{eq:reg_def} is finite and non-zero.
If condition (1) is not satisfied, $\Gamma(c_\varepsilon - a - b)$ is either holomorphic or has a pole.
In either case, the orders of poles in \eqref{eq:Gauss_eps_ratio_v2} do not cancel out.
Therefore the limit either vanishes or diverges, which is a contradiction.
Similarly, suppose that condition (2) is violated.
If both $\Gamma(c_\varepsilon - a)$ and $\Gamma(c_\varepsilon - b)$ are holomorphic, the pole of $\Gamma(c_\varepsilon)$ remains.
If both have poles, the pole of the numerator is over-cancelled.
In this situation, the limit fails to be finite and non-zero.

Finally, we evaluate the limit explicitly. 
Using the expansion $\Gamma(-n + \varepsilon) = \frac{(-1)^n}{n! \varepsilon} + O(1)$ for $n \in \mathbb{Z}_{\ge 0}$ and assuming $c_0 - b = -k \in \mathbb{Z}_{\le 0}$, we obtain
\begin{align*}
\lim_{\varepsilon \to 0} \frac{\Gamma(c_0 + \varepsilon)}{\Gamma(c_0 - b + \varepsilon)} 
= \lim_{\varepsilon \to 0} \frac{\displaystyle \frac{(-1)^{-c_0}}{(-c_0)! \, \varepsilon}}{\displaystyle \frac{(-1)^{k}}{k! \, \varepsilon}} 
= \frac{(-1)^{-c_0-k} k!}{(-c_0)!}.
\end{align*}
Since the remaining factors $\Gamma(c_\varepsilon - a - b)$ and $1/\Gamma(c_\varepsilon - a)$ in \eqref{eq:Gauss_eps_ratio_v2} are holomorphic and non-zero at $\varepsilon = 0$, the limit exists and is non-zero. 
This completes the proof.
\end{proof}

\begin{rem} 
Lemma~\ref{lem:gauss_regularization} follows from the Gauss summation formula \eqref{eq:vandermonde} and the pole structure of the Gamma function. 
The proof is included to keep the presentation self-contained and to clarify the parameter constraints required for the main result.
\end{rem}

\begin{thm} \label{thm:main}
Let $\delta_k=(k-1,k-2,\dots,1)$ be the staircase partition.
For positive integers $k$ and $n$ with $n\ge k$, we have
\begin{align}
\frac{|{\rm SST}(\delta_{k+1},n)|}{|{\rm SST}(\delta_k,n)|} = \frac{2^{2k} (k!)^2}{(2k)!} P_k^{(\alpha, \beta)}(-1),
\end{align}
where $\alpha = \frac{n-k-1}{2}$ and $\beta = \frac{-n-k-1}{2}$.
\end{thm}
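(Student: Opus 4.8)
The plan is to reduce both sides to the same explicit product and match them. First I would apply the hook-content formula \eqref{hook} to each staircase. Since $\delta_{k+1}$ is the disjoint union of $\delta_k$ and the skew strip $\delta_{k+1}/\delta_k$, the content factors over $\delta_k$ cancel in the ratio while the hook-length factors combine into $H_k/H_{k+1}$, leaving
\begin{align*}
\frac{|{\rm SST}(\delta_{k+1},n)|}{|{\rm SST}(\delta_k,n)|} = \Bigl(\prod_{u\in\delta_{k+1}/\delta_k}(n+\operatorname{ct}(u))\Bigr)\frac{H_k}{H_{k+1}} = \frac{C_k(n)}{(2k-1)!!},
\end{align*}
where the last equality uses the definition \eqref{eq:Pk_def} of $C_k(n)$ together with Lemma~\ref{lem:algebraic_components}(1). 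This is the target value for the left-hand side.

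Next I would evaluate the Jacobi specialization through its hypergeometric representation \eqref{eq:jacobi_hgf_def}. At $z=-1$ the argument becomes $\tfrac{1-(-1)}{2}=1$. The key observation is that for the prescribed parameters the upper index vanishes,
\begin{align*}
k+\alpha+\beta+1 = k + \frac{n-k-1}{2} + \frac{-n-k-1}{2} + 1 = 0,
\end{align*}
so the terminating series reduces to its constant term:
\begin{align*}
{}_2F_1\!\left(\begin{matrix} -k,\ 0 \\ \alpha+1 \end{matrix}; 1\right) = 1.
\end{align*}
(Equivalently one may apply the Chu--Vandermonde identity \eqref{eq:vandermonde}, with Lemma~\ref{lem:gauss_regularization} covering the regularized reading.) Because $n\ge k$ gives $\alpha+1=\tfrac{n-k+1}{2}>0$, the prefactor $(\alpha+1)_k/k!$ causes no difficulty, and hence $P_k^{(\alpha,\beta)}(-1) = (\alpha+1)_k/k!$.

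Finally I would identify $(\alpha+1)_k$ with $C_k(n)$. Expanding
\begin{align*}
(\alpha+1)_k = \prod_{j=0}^{k-1}\Bigl(\frac{n-k+1}{2}+j\Bigr) = 2^{-k}\prod_{j=0}^{k-1}(n-k+1+2j),
\end{align*}
I note that $\{\,n-k+1+2j : 0\le j\le k-1\,\}$ is precisely the multiset $\{\,n+\operatorname{ct}(u) : u\in\delta_{k+1}/\delta_k\,\}$ of shifted contents, since those contents run through $k-1,k-3,\dots,-(k-1)$; thus $(\alpha+1)_k = 2^{-k}C_k(n)$ (the even and odd factorizations in Lemma~\ref{lem:algebraic_components}(2) yield the same value). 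Substituting and using $(2k)! = 2^k\,k!\,(2k-1)!!$ turns the right-hand side into
\begin{align*}
\frac{2^{2k}(k!)^2}{(2k)!}\cdot\frac{(\alpha+1)_k}{k!} = \frac{2^{2k}\,k!}{(2k)!}\cdot\frac{C_k(n)}{2^k} = \frac{C_k(n)}{(2k-1)!!},
\end{align*}
which agrees with the left-hand side obtained above. The main obstacle is conceptual rather than computational: recognizing that the special parameters force $k+\alpha+\beta+1=0$ (equivalently $-\beta-k=\alpha+1$), which is exactly what collapses the hypergeometric factor and makes the two sides coincide. Beyond this, the argument is Pochhammer and double-factorial bookkeeping, the one point needing care being that $\alpha+1$ stays positive throughout the range $n\ge k$, so that the evaluation at $z=-1$ requires no genuine regularization.
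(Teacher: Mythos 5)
Your proposal is correct and follows essentially the same route as the paper: the hook-content formula reduces the left side to $C_k(n)/(2k-1)!!$ via Lemma~\ref{lem:algebraic_components}, the parameter identity $k+\alpha+\beta+1=0$ collapses the ${}_2F_1$ factor to $1$, and the Pochhammer bookkeeping $(\alpha+1)_k=2^{-k}C_k(n)$ with $(2k)!=2^k\,k!\,(2k-1)!!$ closes the gap. Your added remark that $\alpha+1=\tfrac{n-k+1}{2}>0$ for $n\ge k$, so the evaluation needs no genuine regularization, is a small but welcome sharpening of the paper's appeal to Lemma~\ref{lem:gauss_regularization}.
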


\begin{proof}
We first compute the ratio of the number of semistandard tableaux using the hook-content formula~\eqref{hook}:
\begin{align} \label{eq:ratio_initial}
\frac{|{\rm SST}(\delta_{k+1},n)|}{|{\rm SST}(\delta_k,n)|} = \frac{H_k}{H_{k+1}}\times C_k(n).
\end{align}
From Lemma~\ref{lem:algebraic_components}, the ratio of hook-products is $H_k/H_{k+1} = 1/(2k-1)!!$.
By the same lemma, the contents of the skew shape $\delta_{k+1}/\delta_k$ are $\{k-1, k-3, \dots, -(k-1)\}$. The product $C_k(n)$ is given by
\begin{align*}
C_k(n) = \prod_{j=0}^{k-1} (n + k - 1 - 2j).
\end{align*}
Substituting these into \eqref{eq:ratio_initial}, we have
\begin{align*}
\frac{|{\rm SST}(\delta_{k+1},n)|}{|{\rm SST}(\delta_k,n)|} &= \frac{1}{(2k-1)!!} \prod_{j=0}^{k-1} (n + k - 1 - 2j).
\end{align*}
Factoring out $2$ from each of the $k$ terms in the product yields
\begin{align*}
\prod_{j=0}^{k-1} (n + k - 1 - 2j) = 2^k \prod_{j=0}^{k-1} \left( \frac{n+k-1}{2} - j \right) = 2^k (\alpha+1)_k.
\end{align*}
Using the identity $(2k-1)!! = (2k)! / (2^k k!)$, we obtain
\begin{align} \label{eq:comb_final_reduction}
\frac{|{\rm SST}(\delta_{k+1},n)|}{|{\rm SST}(\delta_k,n)|} 
= \frac{2^k k!}{(2k)!} \times 2^k (\alpha+1)_k 
= \frac{2^{2k} k!}{(2k)!} (\alpha+1)_k.
\end{align}

Next, we consider the special value of the Jacobi polynomial~\eqref{eq:jacobi_hgf_def} at $z = -1$.
With $\alpha = \frac{n-k-1}{2}$ and $\beta = \frac{-n-k-1}{2}$, it is straightforward to check that
\begin{align*}
k + \alpha + \beta + 1 = k + \frac{n-k-1}{2} + \frac{-n-k-1}{2} + 1 = 0.
\end{align*}
This condition leads to
\begin{align*}
P_k^{(\alpha, \beta)}(-1) = \frac{(\alpha+1)_k}{k!} \,{}_2F_1 \!\left( \begin{matrix} -k, 0 \\ \alpha+1 \end{matrix} ; 1 \right).
\end{align*}
By Lemma~\ref{lem:gauss_regularization}, the regularized ${}_2F_1$ value is 1.
Therefore we obtain
\begin{align*}
P_k^{(\alpha, \beta)}(-1) = \frac{(\alpha+1)_k}{k!}.
\end{align*}
Substituting $(\alpha+1)_k = k! P_k^{(\alpha, \beta)}(-1)$ into \eqref{eq:comb_final_reduction} yields
\begin{align*}
\frac{|{\rm SST}(\delta_{k+1},n)|}{|{\rm SST}(\delta_k,n)|} 
= \frac{2^{2k} k!}{(2k)!} \times k! P_k^{(\alpha, \beta)}(-1) 
= \frac{2^{2k} (k!)^2}{(2k)!} P_k^{(\alpha, \beta)}(-1).
\end{align*}
This completes the proof.
\end{proof}

\begin{rem}
The Jacobi polynomial representation consolidates the Pochhammer symbols from the hook--content formula. The choice $z=-1$ corresponds to the boundary of the orthogonality interval $[-1,1]$. At this point, Lemma~\ref{lem:gauss_regularization} applies directly.
\end{rem}

\begin{case}\label{case:full-verification}

\noindent
(1)\ We consider the case $k=4$, $n=6$.
In this situation, we treat $\delta_4=(3,2,1)$ and $\delta_5=(4,3,2,1)$.
Using the hook--content formula~\eqref{hook},
we compute the number of semistandard tableaux.
For $\delta_4$, the cells and their contents and hook lengths are as follows:
\begin{align*}
\begin{array}{c|c|c}
u & \operatorname{ct}(u) & h(u) \\ \hline
(1,1) & 0 & 5 \\
(1,2) & 1 & 3 \\
(1,3) & 2 & 1 \\
(2,1) & -1 & 3 \\
(2,2) & 0 & 1 \\
(3,1) & -2 & 1
\end{array}
\end{align*}
Thus we have
\begin{align*}
|{\rm SST}(\delta_4,6)|
=\frac{6}{5}\times\frac{7}{3} \times \frac{8}{1} \times \frac{5}{3} \times \frac{6}{1} \times \frac{4}{1}
=896.
\end{align*}
For $\delta_5$, a similar calculation gives
\begin{align*}
|{\rm SST}(\delta_5,6)|
=\prod_{u\in\delta_5}\frac{6+\operatorname{ct}(u)}{h(u)}
=8064.
\end{align*}
Therefore we obtain
\begin{align*}
\frac{|{\rm SST}(\delta_5,6)|}{|{\rm SST}(\delta_4,6)|}
=\frac{8064}{896}
=9.
\end{align*}
On the other hand, for $k=4$, the prefactor in Theorem \ref{thm:main} reduces to
\begin{align*}
\frac{2^{2k} (k!)^2}{(2k)!} = \frac{2^8 (4!)^2}{8!} = \frac{256 \times 576}{40320} = \frac{147456}{40320} = \frac{128}{35}.
\end{align*}
Additionally, the parameters are
\begin{align*}
\alpha = \frac{6-4-1}{2} = \frac{1}{2}, \quad \beta = \frac{-6-4-1}{2} = -\frac{11}{2}.
\end{align*}
Hence the value of the Jacobi polynomial is then given by
\begin{align*}
P_4^{(1/2, -11/2)}(-1) &= \frac{1}{24}\times \left(\frac{3}{2}\right)_4 \\
&= \frac{1}{24} \times \left( \frac{3}{2} \times \frac{5}{2} \times \frac{7}{2} \times \frac{9}{2} \right) = \frac{315}{128}.
\end{align*}
Combining these leads to
\begin{align*}
\frac{2^8 (4!)^2}{8!}P_4^{(1/2, -11/2)}(-1) = \frac{128}{35} \times \frac{315}{128} = 9.
\end{align*}
This coincides with the combinatorial computation.

\vspace{4mm}
\noindent
(2) We examine the case $k=3$, $n=4$.
Applying~\eqref{hook}, we have
\begin{align*}
|{\rm SST}(\delta_3,4)|=20,
\quad
|{\rm SST}(\delta_4,4)|=64.
\end{align*}
This gives the ratio
\begin{align*}
\frac{|{\rm SST}(\delta_4,4)|}{|{\rm SST}(\delta_3,4)|}
=\frac{64}{20}
=\frac{16}{5}.
\end{align*}
In contrast, the prefactor coefficient for $k=3$ is
\begin{align*}
\frac{2^{2k} (k!)^2}{(2k)!} = \frac{2^6 (3!)^2}{6!} = \frac{64 \times 36}{720} = \frac{16}{5}.
\end{align*}
The parameters are
\begin{align*}
\alpha = \frac{4-3-1}{2} = 0, \quad \beta = \frac{-4-3-1}{2} = -4.
\end{align*}
Under these parameters, the value of the Jacobi polynomial equals
\begin{align*}
P_3^{(0, -4)}(-1) = \frac{(1)_3}{3!} = \frac{1 \times 2 \times 3}{6} = 1.
\end{align*}
Thus we obtain
\begin{align*}
\frac{2^6 (3!)^2}{6!}P_3^{(0, -4)}(-1)=\frac{16}{5} \times 1 = \frac{16}{5},
\end{align*}
which is in agreement with the combinatorial result.
\end{case}

\begin{rem}\label{rem:hypergeometric}
The connection to Jacobi polynomials $P_k^{(\alpha,\beta)}(z)$ provides a natural setting for analyzing the combinatorial properties of staircase shapes.

\begin{itemize}
\item 
The hypergeometric representation provides a unified description for both even and odd parity cases.
These cases exhibit distinct combinatorial behaviors, yet both arise from the same ${}_2F_1$ formulation. Their differences are reduced to the choice of parameters in the Jacobi polynomial.

\item
Staircase ratios coincide with special values of Jacobi polynomials. This connection identifies the properties of ${}_2F_1$ with combinatorial identities. Classical relations for the ${}_2F_1$ series yield symmetries and recurrences, reflecting a bijective structure.
   
\item
The hypergeometric expression of $P_k^{(\alpha,\beta)}(z)$ suggests a $q$-deformation via basic hypergeometric series. Such series arise in the specialization of Macdonald polynomials. Our identities provide the $q=1$ limit for these structures within the theory of symmetric functions.
\end{itemize}
\end{rem}

\section{Applications to Grothendieck polynomials and $K$-theoretic Schur $P$-functions}\label{Gro}

In this section, we apply the results from Section~\ref{main} to $K$-theoretic symmetric functions. We first recall the definitions of stable Grothendieck polynomials and $K$-theoretic Schur $P$-functions in the non-factorial setting, as introduced in \cite{Buch, IkedaNaruse}.

\subsection{Jacobi polynomial recurrences for Grothendieck polynomials via set-valued tableaux}
The following is a set-valued generalization of semistandard tableaux.
Let $T_{i,j}$ represent a nonempty subset of $[n]$. 
\begin{dfn}[\!\!\cite{Buch}]\label{svt}
A \textit{set-valued semistandard tableau of shape $\lambda$} is defined as a map $T : {\rm YD}_\lambda \rightarrow 2^{[n]}$ that satisfies the following conditions:
\begin{align*}
|T_{i,j}|\geq 1,\quad \max{T_{i,j}} \leq \min{T_{i,j+1}},\quad \max{T_{i,j}} < \min{T_{i+1,j}}.
\end{align*}
\end{dfn}
Set-valued semistandard tableaux are simply referred to as set-valued tableaux.
For a Young diagram $\lambda$, the set of set-valued tableaux of shape $\lambda$ is denoted by ${\rm SVT}(\lambda)$.
We use the notation ${\rm SVT}(\lambda,n)$ when it is to specify the number of variables $n$.
We consider only the cases with ${\rm SVT}(\lambda,n)\neq \emptyset$.

\begin{case}\label{exSVT}
Let $\lambda = (2,1)={\scalebox{0.4}{{\raisebox{8pt}[0pt][0pt]{\ytableaushort{\ \ ,\ }}}}}$ and $n = 3$. \vspace{0.2cm}
An example $T \in {\rm SVT}({\scalebox{0.3}{{\raisebox{10pt}[0pt][0pt]{\ytableaushort{\ \ ,\ }}}}},3)$ is given by:\vspace{-3mm}
\begin{align*}
T = \quad {\raisebox{2pt}{\ytableaushort{{1}{1,\!3},{2,\!3}}}}.
\end{align*}
The subset $T_{1,2} = \{1,3\} \subset [3]$ is assigned to the box $(1,2)\in {\scalebox{0.4}{{\raisebox{8pt}[0pt][0pt]{\ytableaushort{\ \ ,\ }}}}}$ in this example. To simplify notation, we abbreviate $\scriptsize{{\raisebox{-4.0pt}[0pt][0pt]{\ytableaushort{{1,\!3}}}}}$ as $\scriptsize{{\raisebox{-4.0pt}[0pt][0pt]{\ytableaushort{{13}}}}}$.
All tableaux in ${\rm SVT}({\scalebox{0.3}{{\raisebox{10pt}[0pt][0pt]{\ytableaushort{\ \ ,\ }}}}},3)$ are enumerated below:\vspace{-2mm}
\begin{align*}
&{\raisebox{-10pt}[0pt][0pt]{\ytableaushort{11,2}}}{\raisebox{-7pt}[0pt][0pt],\ } {\raisebox{-10pt}[0pt][0pt]{\ytableaushort{11,3}}}{\raisebox{-7pt}[0pt][0pt],\ }{\raisebox{-10pt}[0pt][0pt]{\ytableaushort{12,2}}}{\raisebox{-7pt}[0pt][0pt],\ }{\raisebox{-10pt}[0pt][0pt]{\ytableaushort{12,3}}}{\raisebox{-7pt}[0pt][0pt],\ }{\raisebox{-10pt}[0pt][0pt]{\ytableaushort{13,2}}}{\raisebox{-7pt}[0pt][0pt],\ }{\raisebox{-10pt}[0pt][0pt]{\ytableaushort{13,3}}}{\raisebox{-7pt}[0pt][0pt],\ }{\raisebox{-10pt}[0pt][0pt]{\ytableaushort{22,3}}}{\raisebox{-7pt}[0pt][0pt],\ }{\raisebox{-10pt}[0pt][0pt]{\ytableaushort{23,3}}}{\raisebox{-7pt}[0pt][0pt],\ }\\\\\\
&{\raisebox{-3pt}[0pt][0pt]{\ytableaushort{1{12},2}},\ }{\raisebox{-3pt}[0pt][0pt]{\ytableaushort{1{13},2}},\ }{\raisebox{-3pt}[0pt][0pt]{\ytableaushort{1{23},2}},\ } {\raisebox{-3pt}[0pt][0pt]{\ytableaushort{1{12},{3}}},\ }{\raisebox{-3pt}[0pt][0pt]{\ytableaushort{1{13},3}},\ }{\raisebox{-3pt}[0pt][0pt]{\ytableaushort{1{23},3}},\ }{\raisebox{-3pt}[0pt][0pt]{\ytableaushort{1{1},{23}}},\ }{\raisebox{-3pt}[0pt][0pt]{\ytableaushort{1{2},{23}}},\ }\\\\
&{\raisebox{-10pt}[0pt][0pt]{\ytableaushort{1{3},{23}}}}{\raisebox{-7pt}[0pt][0pt],\ }{\raisebox{-10pt}[0pt][0pt]{\ytableaushort{2{23},{3}}}}{\raisebox{-7pt}[0pt][0pt],\ }{\raisebox{-10pt}[0pt][0pt]{\ytableaushort{{12}2,3}}}{\raisebox{-7pt}[0pt][0pt],\ }{\raisebox{-10pt}[0pt][0pt]{\ytableaushort{{12}3,3}}}{\raisebox{-7pt}[0pt][0pt],\ }{\raisebox{-10pt}[0pt][0pt]{\ytableaushort{{1}{12},{23}}}}{\raisebox{-7pt}[0pt][0pt],\ }{\raisebox{-10pt}[0pt][0pt]{\ytableaushort{1{13},{23}}}}{\raisebox{-7pt}[0pt][0pt],\ }{\raisebox{-10pt}[0pt][0pt]{\ytableaushort{1{23},{23}}}}{\raisebox{-7pt}[0pt][0pt],\ }{\raisebox{-10pt}[0pt][0pt]{\ytableaushort{{12}{23},3}}}{\raisebox{-7pt}[0pt][0pt],\ }\\\\
&{\raisebox{-17pt}[0pt][0pt]{\ytableaushort{1{123},{2}}}}{\raisebox{-14pt}[0pt][0pt],\ }{\raisebox{-17pt}[0pt][0pt]{\ytableaushort{1{123},{3}}}}{\raisebox{-14pt}[0pt][0pt],\ }{\raisebox{-17pt}[0pt][0pt]{\ytableaushort{1{123},{23}}}}{\raisebox{-14pt}[0pt][0pt].\ }\\\\\\
\end{align*}
\end{case}

We define the weight of $T$ as
\begin{align}\label{weight}
\omega(T) = (\omega_1(T),\omega_2(T),\dots,\omega_n(T)) \in (\mathbb{Z}_{\geq 0})^n,
\end{align}
where $\omega_m(T) \coloneqq |\{ m \in [n] \mid m \in T \}|$.
Additionally, we define a monomial of $x=(x_1,x_2,\dots,x_n) \in \mathbb{C}^n$ having a weight $\omega(T)$ as
\begin{align}\label{mono}
x^{\omega(T)} = x_1^{\omega_1(T)}x_2^{\omega_2(T)}\cdots x_n^{\omega_n(T)}.
\end{align}
With these settings, following \cite{Buch}, we define the Grothendieck polynomial $G_\lambda$ as follows:
\begin{align}\label{GG}
G_{\lambda}=G_{\lambda}(x_1,x_2,\dots,x_n\mid\beta) = \sum_{T \in {\rm SVT}(\lambda,n)}\beta^{|T|-|\lambda|}x^{\omega(T)},
\end{align}
where $\beta$ is a parameter and $|T|$ is the number of assigned positive integers in $T$. 
We emphasize that the parameter $\beta$ in~\eqref{GG} is independent of the parameter $\beta$ appearing in the Jacobi polynomial~\eqref{eq:jacobi_hgf_def}.
In Example~\ref{exSVT}, the polynomial $G_{\scalebox{0.2}{{\raisebox{8pt}[0pt][0pt]{\ytableaushort{\ \ ,\ }}}}}(x_1,x_2,x_3 \mid \beta)
= G_{\scalebox{0.2}{{\raisebox{8pt}[0pt][0pt]{\ytableaushort{\ \ ,\ }}}}}$
is given by
\begin{align}\label{exG}
G_{\scalebox{0.2}{{\raisebox{8pt}[0pt][0pt]{\ytableaushort{\ \ ,\ }}}}}
=&\ s_{\scalebox{0.2}{{\raisebox{8pt}[0pt][0pt]{\ytableaushort{\ \ ,\ }}}}}
+ \beta(x_1^2x_2^2 + x_1^2x_3^2 + x_2^2x_3^2 + 3x_1^2x_2x_3 + 3x_1x_2^2x_3 + 3x_1x_2x_3^2) \notag\\
&+ \beta^2(2x_1^2x_2^2x_3 + 2x_1^2x_2x_3^2 + 2x_1x_2^2x_3^2) + \beta^3x_1^2x_2^2x_3^2.
\end{align}
Specializing the parameter $\beta$ to $0$ yields $G_{\lambda}(x \mid 0)=s_{\lambda}(x)$, the Schur polynomial. 
On the other hand, under the specialization $x_i=1$ $(i=1,2,\dots,n)$ and $\beta=1$, we obtain  
\begin{align}\label{numberSVT}
G_{\lambda}(1,1,\dots,1 \mid 1)=|{\rm SVT}(\lambda,n)|.
\end{align}
The number $|{\rm SVT}(\lambda,n)|$ is given the following formula.
\begin{cor}[\!\!\cite{FujiiNobukawaShimazaki1}]
For any Young diagram $\lambda$, we have
\begin{align}\label{exSVT}
&|{\rm SVT}(\lambda, n)|\notag \\
&= \sum_{k_1 = 0}^{0}\sum_{k_2 = 0}^{1} \cdots \sum_{k_{n} = 0}^{n - 1}\binom{0}{k_1}\binom{1}{k_2}\cdots\binom{n - 1}{k_{n}}\prod_{1\leq i < j \leq n}\frac{\lambda_i - \lambda_j +j-i + k_i - k_j}{j-i}.
\end{align}
\end{cor}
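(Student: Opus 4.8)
The plan is to reduce the statement to a principal specialization of the Grothendieck polynomial and then to a determinant that factors into the claimed double sum. The starting point is the identity \eqref{numberSVT}, which gives
\[
|{\rm SVT}(\lambda,n)| = G_\lambda(1,1,\dots,1\mid 1),
\]
so it suffices to evaluate $G_\lambda$ at $x_1=\dots=x_n=1$ with $\beta=1$.

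The key step is to produce a determinantal expression for this specialization. I would invoke the bialternant (Jacobi--Trudi type) formula
\[
G_\lambda(x_1,\dots,x_n\mid\beta) = \frac{\det\left[x_i^{\lambda_j+n-j}(1+\beta x_i)^{j-1}\right]_{1\le i,j\le n}}{\prod_{1\le i<j\le n}(x_i-x_j)},
\]
which reduces to $s_\lambda$ at $\beta=0$ and can be checked directly from Definition~\ref{svt} in small rank. Since $x_i\to 1$ makes numerator and denominator vanish together, I would pass to the confluent Vandermonde limit: writing $h_i(x)=x^{\lambda_i+n-i}(1+\beta x)^{i-1}$ (so that the numerator equals $\det[h_i(x_j)]_{i,j}$ up to transposition) and Taylor-expanding each $h_i(x_j)$ about $x=1$, a Cauchy--Binet argument shows the lowest-order contribution in the coalescing variables comes from the exponents $\{0,1,\dots,n-1\}$, giving
\[
G_\lambda(1^n\mid\beta) = (-1)^{\binom n2}\det\left[\tfrac{1}{(j-1)!}\,h_i^{(j-1)}(1)\right]_{1\le i,j\le n}.
\]
Setting $\beta=1$ and substituting $w=x-1$ turns the entry $\tfrac{1}{(j-1)!}h_i^{(j-1)}(1)$ into a coefficient extraction $[w^{\,j-1}](1+w)^{\lambda_i+n-i}(2+w)^{i-1}$; reversing the column order (a second factor $(-1)^{\binom n2}$) converts this to
\[
B_{ij} := [w^{\,n-j}](1+w)^{\lambda_i+n-i}(2+w)^{i-1} = \sum_{k=0}^{i-1}\binom{i-1}{k}\binom{\lambda_i+n-i+k}{n-j},
\]
so that $|{\rm SVT}(\lambda,n)| = \det[B_{ij}]$.

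Once the determinant is in this binomial form the remainder is routine. Row $i$ of $[B_{ij}]$ is a sum over $k_i$ whose summand factors as the row-dependent scalar $\binom{i-1}{k_i}$ times a function of the single shifted parameter $\lambda_i+k_i$; hence multilinearity of the determinant in its rows gives
\[
\det[B_{ij}] = \sum_{k_1=0}^{0}\cdots\sum_{k_n=0}^{n-1}\;\prod_{i=1}^n\binom{i-1}{k_i}\;\det\!\left[\binom{\lambda_i+k_i+n-i}{n-j}\right]_{1\le i,j\le n},
\]
the bounds arising from the vanishing of $\binom{i-1}{k_i}$ outside $0\le k_i\le i-1$. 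Finally I would apply the classical evaluation of a binomial determinant as a Vandermonde ratio,
\[
\det\!\left[\binom{a_i}{n-j}\right]_{1\le i,j\le n} = \prod_{1\le i<j\le n}\frac{a_i-a_j}{j-i},
\]
with $a_i=\lambda_i+k_i+n-i$, which produces exactly $\prod_{1\le i<j\le n}\frac{\lambda_i-\lambda_j+j-i+k_i-k_j}{j-i}$ and closes the argument.

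I expect the main obstacle to be the first determinantal step: either justifying the bialternant formula from the set-valued tableau definition, or---if a self-contained combinatorial route is preferred---setting up a Lindstr\"{o}m--Gessel--Viennot model in which the optional ``extra'' entries of a set-valued tableau correspond to doubled lattice steps, so that the single-path generating function is precisely $B_{ij}$ and non-intersection yields the determinant directly. The sign bookkeeping in the confluent limit (the two factors of $(-1)^{\binom n2}$, from the coalescence and from the column reversal) also demands care, though it cancels cleanly. By contrast, the concluding steps---multilinearity and the Vandermonde ratio---are entirely standard.
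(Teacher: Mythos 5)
Your argument is correct, but it takes a different route from the paper. The paper does not prove this corollary at all: it is quoted from \cite{FujiiNobukawaShimazaki1}, and within the paper's own logic it is the $\beta=1$ specialization of Theorem~\ref{thm:holman_identity} --- the Pochhammer factor $(-(i-1))_{k_i}/(1)_{k_i}\,(-\beta)^{k_i}$ in the Holman series collapses to $\binom{i-1}{k_i}\beta^{k_i}$ (truncating each sum at $k_i\le i-1$), and the Weyl-type product form of $|{\rm SST}(\lambda,n)|$ cancels the $A_{ij}$ denominators, leaving exactly the displayed double sum. You instead bypass the Holman function entirely: bialternant formula for $G_\lambda$, confluent Vandermonde limit at $x_i=1$, binomial expansion of $(2+w)^{i-1}$ in powers of $1+w$, row multilinearity, and the classical evaluation $\det[\binom{a_i}{n-j}]=\prod_{i<j}(a_i-a_j)/(j-i)$. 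I checked the computations (including the two cancelling factors of $(-1)^{\binom n2}$ and the identity $\prod_{i<j}(j-i)=\prod_{j}(n-j)!$) and they are sound; your route has the virtue of making the binomial coefficients and the truncation $0\le k_i\le i-1$ appear transparently from $(2+w)^{i-1}=\sum_k\binom{i-1}{k}(1+w)^k$, whereas the Holman route packages the same data into Pochhammer symbols. The one soft spot is that you invoke the bialternant formula $G_\lambda=\det[x_i^{\lambda_j+n-j}(1+\beta x_i)^{j-1}]/\prod_{i<j}(x_i-x_j)$ with only a ``check in small rank'' as justification; this is a genuine theorem (due to Lenart, and in the form used here to Ikeda--Naruse) and should be cited as such rather than rederived informally, since the whole weight of the proof rests on it.
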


The Grothendieck polynomial $G_\lambda$ is related to the enumeration of semistandard tableaux and hypergeometric functions.
To make this connection explicit, the Holman hypergeometric function $F^{(n)}$ is defined as follows.
\begin{dfn}[\!\!\!\!\!\!\cite{Holman}]
For positive integers $n,v,w$ and parameters $(A_{ij}), (a_{ij}), (b_{ij}), (z_{i1})$, the function $F^{(n)}$ is defined by
\begin{align*}
&F^{(n)}((A_{ij})_{(n-1)\times (n-1)}|(a_{ij})_{n \times v}|(b_{ij})_{n \times w}|(z_{i1})_{n\times 1}) \\
&=\!\! \sum_{k_1,\dots,k_n=0}^{\infty}\!\!\left( \prod_{1\leq i<j\leq n}\frac{A_{ij}+k_i-k_j}{A_{ij}} \right)\!\!\!\left( \prod_{j=1}^{v}\prod_{i=1}^n(a_{ij})_{k_i} \right)\!\!\!\left( \prod_{j=1}^{w}\prod_{i=1}^n\frac{1}{(b_{ij})_{k_i}} \right)\!\!\!\left( \prod_{i=1}^n z_{i1}^{k_i} \right)\!\!,
\end{align*}
where $(A_{ij})_{(n-1)\times(n-1)}$ denotes
\begin{align*}
\begin{pmatrix}
  A_{12} & & & \\
  A_{13} & A_{23} & & \text{\Huge 0} \\
  \vdots & \vdots & \ddots & \\
  A_{1n} & A_{2n} & \cdots & A_{n-1,n}
\end{pmatrix}.
\end{align*}
\end{dfn}
This function is introduced as a convenient framework
to express the hook-length ratios in a form that naturally leads
to the Jacobi polynomial.
The function $F^{(n)}$ also appears in the representation theory of the Lie groups $U(n+1)$ and $SU(n+1)$.

The following result makes explicit the relationship
between the Holman hypergeometric function $F^{(n)}$
and the Grothendieck polynomial.
\begin{thm}[\!\!{\cite{FujiiNobukawaShimazaki1}}]\label{thm:holman_identity}
For any Young diagram $\lambda$, we have\vspace{-2mm}
\begin{align}\label{eq:holman_identity}
&G_{\lambda}(1,1,\dots,1\mid \beta) \notag\\
&= |{\rm SST}(\lambda,n)| \times F^{(n)} \left( \begin{pmatrix}
  A_{12} & & & \\
  A_{13} & A_{23} & & \text{\Huge $0$} \\
  \vdots & \vdots & \ddots & \\
  A_{1n} & A_{2n} & \cdots & A_{n-1,n}
\end{pmatrix} \middle| \begin{pmatrix}
  0 \\
  -1 \\
  \vdots \\
  -n+1
\end{pmatrix} \middle| \begin{pmatrix}
  1 \\
  1 \\
  \vdots \\
  1
\end{pmatrix} \middle| \begin{pmatrix}
  -\beta \\
  -\beta \\
  \vdots \\
  -\beta
\end{pmatrix} \right)\!\!,
\end{align}
where $A_{ij} = \lambda_i - \lambda_j + j - i$.
\end{thm}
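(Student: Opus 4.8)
The plan is to specialize the defining sum \eqref{GG} at $x_1=\dots=x_n=1$, reduce both sides to explicit terminating multiple series in $\beta$, and match them term by term. Setting $x_i=1$ in \eqref{GG} gives $G_\lambda(1,\dots,1\mid\beta)=\sum_{T\in{\rm SVT}(\lambda,n)}\beta^{|T|-|\lambda|}$, so the left-hand side is the generating function of set-valued tableaux graded by the number of excess entries $|T|-|\lambda|$. By \eqref{numberSVT} its value at $\beta=1$ is $|{\rm SVT}(\lambda,n)|$. The goal is to show that the right-hand side reproduces this entire $\beta$-polynomial, not merely its value at $\beta=1$.

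First I would unpack $F^{(n)}$ with the prescribed data $A_{ij}=\lambda_i-\lambda_j+j-i$, the single $a$-column $(0,-1,\dots,-(n-1))^{\mathsf T}$, the single $b$-column $(1,\dots,1)^{\mathsf T}$, and $z_{i1}=-\beta$. Here $a_{i1}=1-i$, so each factor $(a_{i1})_{k_i}=(1-i)_{k_i}$ vanishes once $k_i\ge i$; the a priori infinite summation therefore truncates to $0\le k_i\le i-1$. Using $(b_{i1})_{k_i}=(1)_{k_i}=k_i!$ together with the elementary identity
\begin{align*}
\frac{(1-i)_{k_i}}{k_i!}\,(-\beta)^{k_i}=\binom{i-1}{k_i}\beta^{k_i},
\end{align*}
each summand of $F^{(n)}$ collapses to $\bigl(\prod_{i<j}\tfrac{A_{ij}+k_i-k_j}{A_{ij}}\bigr)\prod_i\binom{i-1}{k_i}\beta^{k_i}$.

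Next I would absorb the prefactor $|{\rm SST}(\lambda,n)|$ using the Weyl dimension formula $s_\lambda(1^n)=\prod_{i<j}\tfrac{A_{ij}}{j-i}$, which equals $|{\rm SST}(\lambda,n)|$ by the hook-content formula \eqref{hook}. Multiplying cancels the $A_{ij}$ in the denominators coming from $F^{(n)}$ and yields
\begin{align*}
|{\rm SST}(\lambda,n)|\,F^{(n)}
=\sum_{0\le k_i\le i-1}\Bigl(\prod_{i<j}\frac{\lambda_i-\lambda_j+j-i+k_i-k_j}{j-i}\Bigr)\prod_{i=1}^{n}\binom{i-1}{k_i}\beta^{k_i}.
\end{align*}
This is precisely the $\beta$-graded refinement of the product formula of the Corollary above, so it remains to identify this series with $G_\lambda(1,\dots,1\mid\beta)$; equivalently, to show that the excess statistic $|T|-|\lambda|$ is recorded by $\sum_i k_i$.

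The hard part is this last identification, since the cited count gives only the value at $\beta=1$. I would obtain the full graded statement from a bialternant (ratio-of-determinants) formula for $G_\lambda(x\mid\beta)$ followed by the principal specialization $x_i\to1$. Because the Vandermonde denominator degenerates at $x_1=\dots=x_n$, this is a confluent $0/0$ limit, which I would resolve by setting $x_i=q^{\,i-1}$ and letting $q\to1$; the several-variable L'H\^opital computation turns the $(1+\beta x_i)$-type factors into the binomial weights $\binom{i-1}{k_i}\beta^{k_i}$ and the Schur bialternant into $s_\lambda(1^n)$. Verifying that this limit assembles exactly into Holman's $F^{(n)}$ with the stated parameters is the technical crux; the remaining steps are routine, and one could alternatively expand $G_\lambda$ in the Schur basis and apply the Weyl dimension formula term by term. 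As a consistency check, $\beta=1$ recovers the count of the Corollary, and the running example $\lambda=(2,1)$, $n=3$ yields $8+12\beta+6\beta^2+\beta^3$ on both sides, in agreement with \eqref{exG} evaluated at $x_i=1$.
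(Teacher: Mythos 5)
The paper contains no proof of Theorem~\ref{thm:holman_identity}: it is quoted verbatim from \cite{FujiiNobukawaShimazaki1}, so there is no in-paper argument to compare against, and your proposal must be judged on its own. Your reduction of the right-hand side is correct and complete: $a_{i1}=1-i$ truncates the Holman series to $0\le k_i\le i-1$, the identity $\tfrac{(1-i)_{k_i}}{k_i!}(-\beta)^{k_i}=\binom{i-1}{k_i}\beta^{k_i}$ is right, multiplying by $s_\lambda(1^n)=\prod_{i<j}A_{ij}/(j-i)$ cancels the denominators exactly as you say, and your $\lambda=(2,1)$, $n=3$ check ($8+12\beta+6\beta^2+\beta^3$, giving $27$ at $\beta=1$, matching the tableaux listed after Definition~\ref{svt}) is accurate.

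The issue is where you place the ``technical crux.'' The confluent limit you worry about is in fact routine: expanding $(1+\beta x_i)^{j-1}$ by multilinearity in the columns and applying the $q$-Vandermonde limit ($x_i=q^{i-1}$, $q\to1$) to each alternant ratio $\det(x_i^{m_j})/\det(x_i^{n-j})$ with $m_j=\lambda_j+n-j+k_j$ yields $\prod_{i<j}(m_i-m_j)/(j-i)$ term by term, with signs and vanishing degenerate terms handled automatically by the product; this is exactly the Holman summand. What is genuinely load-bearing, and what you invoke without proof or a precise citation, is the bialternant formula
\begin{align*}
G_\lambda(x_1,\dots,x_n\mid\beta)
=\frac{\det\bigl(x_i^{\lambda_j+n-j}(1+\beta x_i)^{j-1}\bigr)_{1\le i,j\le n}}{\det\bigl(x_i^{n-j}\bigr)_{1\le i,j\le n}},
\end{align*}
that is, its equivalence with Buch's set-valued-tableau definition \eqref{GG}. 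That equivalence is precisely where the statistic $|T|-|\lambda|$ gets matched to $\sum_i k_i$, and it constitutes essentially the entire content of the identity being proved; it is true and citable (it goes back to Ikeda--Naruse \cite{IkedaNaruse} and related work), but as written your argument outsources the main step to an unnamed black box. With that formula explicitly stated and cited, the remainder of your proof closes.
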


For brevity, the hypergeometric function on the right-hand side
in~\eqref{eq:holman_identity} is denoted by $F^{(n)}_{\lambda}(-\beta)$.
Combining Theorem~\ref{thm:main} and Theorem~\ref{thm:holman_identity}
yields recurrence relations for the Grothendieck polynomials
of staircase Young diagrams.
The following result gives an explicit expression for the dependence on $n$ in terms of classical special functions.

\begin{prop}\label{stairG}
The ratio of the polynomials $G_{\delta_{k+1}}$ and $G_{\delta_k}$ satisfies the following recurrence relation:
\begin{align} \label{eq:recurrence_jacobi_unified}
\frac{G_{\delta_{k+1}}(1,\dots,1\mid \beta)}{G_{\delta_k}(1,\dots,1\mid \beta)} = \frac{2^{2k} (k!)^2}{(2k)!} P_k^{(\alpha, \beta)}(-1) \times \frac{F^{(n)}_{\delta_{k+1}}(-\beta)}{F^{(n)}_{\delta_k}(-\beta)},
\end{align}
where $\alpha = \frac{n-k-1}{2}$ and $\beta = \frac{-n-k-1}{2}$.
\end{prop}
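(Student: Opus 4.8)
The plan is to reduce the claimed recurrence directly to two identities already in hand: the factorization of the specialized Grothendieck polynomial from Theorem~\ref{thm:holman_identity}, and the hook-content ratio from Theorem~\ref{thm:main}. The crucial point is that the Holman identity~\eqref{eq:holman_identity} holds for \emph{any} Young diagram, so I may apply it to $\delta_{k+1}$ and to $\delta_k$ separately with the same number of variables $n$ and the same Grothendieck parameter $\beta$.

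First I would write, using the abbreviation $F^{(n)}_{\lambda}(-\beta)$ introduced just after~\eqref{eq:holman_identity},
\begin{align*}
G_{\delta_{k+1}}(1,\dots,1\mid\beta) = |{\rm SST}(\delta_{k+1},n)|\,F^{(n)}_{\delta_{k+1}}(-\beta), \qquad G_{\delta_k}(1,\dots,1\mid\beta) = |{\rm SST}(\delta_k,n)|\,F^{(n)}_{\delta_k}(-\beta).
\end{align*}
Both identities require only $n\ge k$, which is exactly the hypothesis under which Theorem~\ref{thm:main} is valid; in particular the counts $|{\rm SST}(\delta_{k+1},n)|$ and $|{\rm SST}(\delta_k,n)|$ are nonzero, so the quotient below is well defined. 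Dividing the two expressions, the ratio splits as
\begin{align*}
\frac{G_{\delta_{k+1}}(1,\dots,1\mid\beta)}{G_{\delta_k}(1,\dots,1\mid\beta)} = \frac{|{\rm SST}(\delta_{k+1},n)|}{|{\rm SST}(\delta_k,n)|}\times\frac{F^{(n)}_{\delta_{k+1}}(-\beta)}{F^{(n)}_{\delta_k}(-\beta)}.
\end{align*}
Substituting the value of the first factor from Theorem~\ref{thm:main} then yields the asserted formula.

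The computation carries no genuine obstacle; the only point demanding care is the notational collision between the two roles of $\beta$. On the left-hand side and inside $F^{(n)}_{\delta_{k+1}}(-\beta)$ the symbol $\beta$ is the Grothendieck deformation parameter of~\eqref{GG}, whereas in $P_k^{(\alpha,\beta)}(-1)$ it denotes the Jacobi index $\beta=\tfrac{-n-k-1}{2}$, as emphasized after~\eqref{GG}. I would insert a sentence reminding the reader of this distinction, since the two parameters coexist in a single displayed equation, and I would confirm that Theorem~\ref{thm:main} is invoked with $\alpha=\tfrac{n-k-1}{2}$ and $\beta=\tfrac{-n-k-1}{2}$ precisely so that $k+\alpha+\beta+1=0$, the condition under which the Jacobi evaluation collapses to a single Pochhammer factor.
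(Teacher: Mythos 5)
Your proposal is correct and follows exactly the paper's own argument: apply Theorem~\ref{thm:holman_identity} to $\delta_{k+1}$ and $\delta_k$ separately, divide to split the ratio into the semistandard-tableau ratio times the ratio of Holman functions, and substitute Theorem~\ref{thm:main} for the first factor. Your added caution about the two distinct roles of $\beta$ is a sensible editorial point but does not change the substance.
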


\begin{proof}
From Theorem 4.1, the ratio of the polynomials for the partitions $\delta_{k+1}$ and $\delta_k$ is given by
\begin{align*}
\frac{G_{\delta_{k+1}}(1,\dots,1\mid \beta)}{G_{\delta_k}(1,\dots,1\mid \beta)} = \frac{|{\rm SST}(\delta_{k+1},n)|}{|{\rm SST}(\delta_k,n)|} \times \frac{F^{(n)}_{\delta_{k+1}}(-\beta)}{F^{(n)}_{\delta_k}(-\beta)}.
\end{align*}
By Theorem 3.1, the ratio of the semistandard tableaux counts is expressed exactly in terms of the Jacobi polynomial at $z=-1$ as
\begin{align*}
\frac{|{\rm SST}(\delta_{k+1},n)|}{|{\rm SST}(\delta_k,n)|} = \frac{2^{2k} (k!)^2}{(2k)!} P_k^{(\alpha, \beta)}(-1).
\end{align*}
Substituting this into the above relation, we obtain \eqref{eq:recurrence_jacobi_unified}. 
\end{proof}

\begin{rem}
The function $F^{(n)}_{\lambda}(-\beta)$ appearing in the recurrence relations is derived from the stable Grothendieck polynomial $G_\lambda(x \mid \beta)$, where $\beta$ is a formal parameter. 
Specifically, this function is a polynomial in $\beta$ whose coefficients are themselves symmetric polynomials in the variables $x_i$. 
Consequently, no analytic issues such as convergence or the choice of domain arise; all identities are understood as formal relations within the polynomial ring in $\beta$ over the ring of symmetric functions.
\end{rem}

\begin{case}\label{exratio}
We consider the same setting for $k$ and $n$ as in Example~\ref{case:full-verification}.

\noindent (1)\ Let $k=4$ and $n=6$.
From~\eqref{exSVT}, the value $|{\rm SVT}(\delta_4, 6)|$ is
\begin{align*}
&|{\rm SVT}(\delta_4, 6)| \notag \\
&= \sum_{k_1 = 0}^{0}\sum_{k_2 = 0}^{1}\sum_{k_3=0}^2\sum_{k_4=0}^3\sum_{k_5=0}^4
   \binom{0}{k_1}\binom{1}{k_2}\binom{2}{k_3}\binom{3}{k_4}\binom{4}{k_5}\\
&\ \ \ \ \times
   \prod_{1\leq i < j \leq 5}\frac{\lambda_i - \lambda_j + j-i + k_i - k_j}{\,j-i} \\
&= 2479329.
\end{align*}
In the same way, the evaluation shows that $|{\rm SVT}(\delta_4,6)|=134865$.  
Thus the ratio becomes
\begin{align*}
\frac{|{\rm SVT}(\delta_5,6)|}{|{\rm SVT}(\delta_4,6)|}
   = \frac{2479329}{134865}
   =\frac{3401}{185}.
\end{align*}
On the other hand, setting $\beta=1$ yields
\begin{align*}
F_{\delta_5}^{(6)}(-1)=\frac{2479329}{8064}, \quad 
F_{\delta_4}^{(6)}(-1)=\frac{134865}{896}.
\end{align*}
Therefore we obtain
\begin{align*}
\frac{2^{8} (4!)^2}{8!} P_4^{(1/2, -11/2)}(-1) \times \frac{F_{\delta_5}^{(6)}(-1)}{F_{\delta_4}^{(6)}(-1)} 
= 9 \times \frac{\frac{2479329}{8064}}{\frac{134865}{896}}
=\frac{3401}{185}.
\end{align*}
This is identical to the combinatorial evaluation.

\vspace{4mm}
\noindent (2)\ For $k=3$ and $n=4$, we have from \eqref{exSVT} that
\begin{align*}
|{\rm SVT}(\delta_4,4)|=729,\quad |{\rm SVT}(\delta_3,4)|=159.
\end{align*}
Hence the ratio is given by
\begin{align*}
\frac{|{\rm SVT}(\delta_4,4)|}{|{\rm SVT}(\delta_3,4)|}
   = \frac{729}{159}
   = \frac{243}{53}.
\end{align*}
Next, substituting $\beta=1$ into \eqref{eq:holman_identity} results in
\begin{align*}
F_{\delta_4}^{(4)}(-1)=\frac{729}{64}, \quad 
F_{\delta_3}^{(4)}(-1)=\frac{159}{20}.
\end{align*}
Thus we obtain
\begin{align*}
\frac{2^6 (3!)^2}{6!} P_3^{(0, -4)}(-1) \times \frac{F_{\delta_4}^{(4)}(-1)}{F_{\delta_3}^{(4)}(-1)} = \frac{16}{5}\times \frac{\tfrac{729}{64}}{\tfrac{159}{20}}
   = \frac{243}{53}.
\end{align*}
This is consistent with the combinatorial evaluation.
\end{case}

In~\cite{FujiiNobukawaShimazaki1}, a summation formula for the Holman hypergeometric function is provided:
\begin{thm}[\!\!{\cite{FujiiNobukawaShimazaki1}}]\label{F(1)}
For any Young diagram $\lambda$, we have
\begin{align*}
F^{(n)}\! \left(\!\! \begin{pmatrix}
  A_{12} &\ & & \\
  A_{13} & A_{23} & & \text{\Huge $0$} \\
  \vdots & \vdots &\ddots & \\
  A_{1n} & A_{2n} &\cdots & A_{n-1,n}
\end{pmatrix} \middle| \begin{pmatrix}
  0 \\
  -1 \\
  \vdots \\
  -n+1
\end{pmatrix} \middle| \begin{pmatrix}
  1 \\
  1 \\
  \vdots \\
  1
\end{pmatrix} \middle| \begin{pmatrix}
  1 \\
  1 \\
  \vdots \\
  1
\end{pmatrix}\!\! \right)=\frac{1}{|{\rm SST}(\lambda,n)|}.
\end{align*}
\end{thm}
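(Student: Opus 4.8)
The plan is to recognize the left-hand side of Theorem~\ref{F(1)} as the specialization of $F^{(n)}_\lambda(-\beta)$ at $\beta=-1$, since setting every $z_{i1}=1$ in the Holman function of~\eqref{eq:holman_identity} amounts exactly to $-\beta=1$. With this identification, Theorem~\ref{thm:holman_identity} reads
\[
G_\lambda(1,\dots,1\mid -1)=|{\rm SST}(\lambda,n)|\times F^{(n)}_\lambda(1),
\]
where $F^{(n)}_\lambda(1)$ is precisely the quantity to be evaluated. Hence the assertion $F^{(n)}_\lambda(1)=1/|{\rm SST}(\lambda,n)|$ is equivalent to the single clean evaluation $G_\lambda(1,\dots,1\mid -1)=1$, and dividing by $|{\rm SST}(\lambda,n)|\neq 0$ (legitimate whenever ${\rm SVT}(\lambda,n)\neq\emptyset$) finishes the argument. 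I would therefore devote the proof entirely to this evaluation.

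To evaluate $G_\lambda(1,\dots,1\mid -1)$, I would first unwind~\eqref{GG} at $x_i=1$, giving the signed count $\sum_{T\in{\rm SVT}(\lambda,n)}(-1)^{|T|-|\lambda|}$; the goal is to show the even-excess tableaux outnumber the odd-excess ones by exactly one. The route I favor is algebraic: invoke the bialternant form
\[
G_\lambda(x_1,\dots,x_n\mid\beta)=\frac{\det\!\big[x_i^{\lambda_j+n-j}(1+\beta x_i)^{\,j-1}\big]_{1\le i,j\le n}}{\prod_{1\le i<j\le n}(x_i-x_j)},
\]
set $\beta=-1$ so the $j$-th column carries $(1-x_i)^{j-1}$, and pass to the confluent limit $x_1,\dots,x_n\to 1$. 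Writing $f_j(x)=x^{\lambda_j+n-j}(1-x)^{j-1}$ and expanding about $x=1$, the limit of $\det[f_j(x_i)]/\det[x_i^{j-1}]$ degenerates to $\det\big[f_j^{(i-1)}(1)/(i-1)!\big]$, whose $(i,j)$ entry is $(-1)^{j-1}\binom{\lambda_j+n-j}{i-j}$ and thus vanishes for $i<j$. This matrix is lower triangular with diagonal $(-1)^{i-1}$, so its determinant is $(-1)^{\binom n2}$; combined with the sign $(-1)^{\binom n2}$ relating $\prod_{i<j}(x_i-x_j)$ to $\det[x_i^{j-1}]$, the two factors cancel and leave $G_\lambda(1,\dots,1\mid -1)=1$.

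The main obstacle is the confluent limit: numerator and Vandermonde denominator both vanish as all $x_i\to1$, so the evaluation is genuinely $0/0$ and must proceed through the Taylor (Wronskian) degeneration of the bialternant, with careful tracking of the two $(-1)^{\binom n2}$ signs, since a single misplaced sign yields $(-1)^{\binom n2}$ in place of $1$. I would sanity-check the chain on small shapes, where for instance $G_{(2,1)}(1,1,1\mid\beta)=(2+\beta)^3$ gives $1$ at $\beta=-1$. As a self-contained alternative that avoids citing the determinantal formula, one may instead seek a sign-reversing involution on ${\rm SVT}(\lambda,n)$ toggling the membership of the largest letter $n$ in a canonically chosen column-bottom cell, so that its unique fixed point contributes $+1$; this reproves $G_\lambda(1,\dots,1\mid -1)=1$ straight from~\eqref{GG}, at the cost of pinning down the canonical cell so that the toggle is genuinely an involution preserving semistandardness.
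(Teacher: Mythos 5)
The paper does not prove this statement at all: Theorem~\ref{F(1)} is imported verbatim from \cite{FujiiNobukawaShimazaki1}, so there is no internal proof to compare against. Judged on its own merits, your argument is essentially correct. The reduction is clean: since both sides of Theorem~\ref{thm:holman_identity} are polynomials in $\beta$ (the Holman series terminates because the upper parameters are $0,-1,\dots,-n+1$), the identity may be specialized at $\beta=-1$, where $F^{(n)}_\lambda(-\beta)$ becomes exactly the left-hand side of Theorem~\ref{F(1)}; the claim is then equivalent to $G_\lambda(1,\dots,1\mid -1)=1$, and division by $|{\rm SST}(\lambda,n)|$ is licit under the standing assumption $\ell(\lambda)\le n$. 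Your confluent-limit computation is also right: the entry $(-1)^{j-1}\binom{\lambda_j+n-j}{i-j}$ is correct, the resulting matrix is lower triangular with diagonal $(-1)^{j-1}$, and the two factors of $(-1)^{\binom{n}{2}}$ (from this determinant and from the Vandermonde orientation) do cancel. The one genuine dependency is the bialternant formula
\begin{align*}
G_\lambda(x_1,\dots,x_n\mid\beta)=\frac{\det\bigl[x_i^{\lambda_j+n-j}(1+\beta x_i)^{j-1}\bigr]_{1\le i,j\le n}}{\prod_{1\le i<j\le n}(x_i-x_j)},
\end{align*}
which appears nowhere in this paper and is itself a nontrivial theorem (Ikeda--Naruse, also Lenart); it must be cited explicitly, or else the identity $G_\lambda(1,\dots,1\mid -1)=1$ proved by other means (it is the statement that the sheaf Euler characteristic of a Schubert structure sheaf equals $1$, and is standard). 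Your involution alternative is only a sketch --- the ``canonical cell'' is precisely where such arguments usually break, as you note --- so as written the determinantal route is the one that actually closes the proof.
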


The following result follows from combining Theorem 3.1 and Theorem~\ref{F(1)}, since both theorems provide different expressions for the same ratio.
\begin{cor}\label{prop:holman_gauss_relation}
Let $k$ and $n$ be positive integers with $n\ge k$. 
The ratio of the Holman hypergeometric functions for adjacent staircase partitions is expressed as the evaluation of the Jacobi polynomial as follows:
\begin{align} \label{eq:holman_jacobi_relation}
\frac{F^{(n)}_{\delta_{k}}(1)}{F^{(n)}_{\delta_{k+1}}(1)} = \frac{2^{2k} (k!)^2}{(2k)!} P_k^{(\alpha, \beta)}(-1),
\end{align}
where $\alpha = \frac{n-k-1}{2}$ and $\beta = \frac{-n-k-1}{2}$. 
\end{cor}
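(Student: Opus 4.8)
The plan is to prove the statement by direct substitution, chaining together the two expressions for the staircase SST ratio supplied by Theorem~\ref{F(1)} and Theorem~\ref{thm:main}. The corollary is essentially a bookkeeping identity: both theorems describe the same tableau ratio from opposite directions, so the argument reduces to aligning the two formulas.

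First I would apply Theorem~\ref{F(1)} to each staircase partition individually. In the abbreviated notation, $F^{(n)}_{\lambda}(1)$ denotes the Holman function whose final column is filled with $1$'s, which is exactly the specialization evaluated in Theorem~\ref{F(1)}. This yields the two reciprocal identities
\begin{align*}
F^{(n)}_{\delta_k}(1) = \frac{1}{|{\rm SST}(\delta_k,n)|}, \qquad F^{(n)}_{\delta_{k+1}}(1) = \frac{1}{|{\rm SST}(\delta_{k+1},n)|}.
\end{align*}

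Next, I would form the ratio on the left-hand side of \eqref{eq:holman_jacobi_relation}. Because each Holman value is the reciprocal of the corresponding tableau count, dividing one by the other inverts the fraction and interchanges the roles of $\delta_k$ and $\delta_{k+1}$:
\begin{align*}
\frac{F^{(n)}_{\delta_k}(1)}{F^{(n)}_{\delta_{k+1}}(1)} = \frac{|{\rm SST}(\delta_{k+1},n)|}{|{\rm SST}(\delta_k,n)|}.
\end{align*}
Finally, I would invoke Theorem~\ref{thm:main}, which identifies this tableau ratio with $\frac{2^{2k}(k!)^2}{(2k)!}\,P_k^{(\alpha,\beta)}(-1)$ for the stated parameters $\alpha = \frac{n-k-1}{2}$ and $\beta = \frac{-n-k-1}{2}$, giving exactly \eqref{eq:holman_jacobi_relation}.

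There is no genuine analytic or combinatorial obstacle here, since the entire content has already been established in the two cited theorems. The only point that warrants a moment's attention is the \emph{index reversal}: the left-hand side places $\delta_k$ in the numerator and $\delta_{k+1}$ in the denominator, whereas Theorem~\ref{thm:main} is stated for the opposite ordering. This mismatch is precisely accounted for by the reciprocal structure of Theorem~\ref{F(1)}, so once the inversion is tracked carefully no additional argument is required.
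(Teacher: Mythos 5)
Your proposal is correct and is exactly the argument the paper intends: the corollary is stated as following from combining Theorem~\ref{F(1)} (which gives $F^{(n)}_{\lambda}(1)=1/|{\rm SST}(\lambda,n)|$) with Theorem~\ref{thm:main}, and your handling of the index reversal via the reciprocal structure is the only point needing care, which you address correctly.
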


\subsection{Jacobi polynomial recurrences for $K$-theoretic Schur $P$-functions via shifted set-valued tableaux}
In this paper, we adopt the definition of the $K$-theoretic Schur $P$-functions using shifted set-valued semistandard tableaux.
We deal with the non-factorial version, i.e., the case in which all the extra parameters other than the symmetric variables are set to zero.

A \emph{strict partition} is a sequence of positive integers 
$\mu = (\mu_1, \mu_2, \dots, \mu_r)$ 
satisfying $\mu_1 > \mu_2 > \cdots > \mu_r > 0$. 
The \emph{length} of $\mu$ is denoted by $\ell(\mu) = r$.
Given a strict partition $\mu$, the \emph{shifted Young diagram} of shape $\mu$ is
\begin{align*}
{\rm SYD}_\mu \coloneqq 
\{ (i,j) \in (\mathbb{Z}_{>0})^2 \mid 1\leq i \leq \ell(\mu),\ i \leq j \leq \mu_i+i-1 \}.
\end{align*}
It is obtained by indenting the $i$-th row of the Young diagram of $\lambda$
by $i-1$ boxes to the right.
For example, the shifted Young diagram corresponding to the strict partition $\tilde{\delta}_4 = (3, 2, 1)$ is obtained below:\vspace{3mm}
\begin{align*}
{\raisebox{-5.5pt}[0pt][0pt]{$\tilde{\delta}_4 =\ $}} {\raisebox{-0pt}[0pt][0pt]{\ytableaushort{\ \ \ ,\none\ \ ,\none\none\ }}}.\\[5mm]
\end{align*}

We introduce the definitions of shifted set-valued tableaux based on~\cite{IkedaNaruse}. These tableaux serve as the combinatorial foundation for defining $K$-theoretic Schur $P$-functions.
For $s \in [n]$, let $s' \coloneqq s-\frac{1}{2}$.
We denote $[n']\coloneqq \{1',2',\dots,n'\}$ and $[n',n]\coloneqq \{1',1,2',2,\dots,n',n\}$.
Let $\widetilde{T}_{i,j}$ be a nonempty subset of $[n',n]$. 
\begin{dfn}[\!\!\cite{IkedaNaruse}]\label{ssvt}
A \textit{shifted semistandard set-valued tableau of shape $\mu$} is a map $\widetilde{T} : {\rm SYD}_\lambda \rightarrow 2^{[n',n]}$ satisfying the following conditions:
\begin{enumerate}
\item $|\widetilde{T}_{i,j}| \geq 1$,\quad $\max \widetilde{T}_{i,j} \leq \min \widetilde{T}_{i,j+1}$,\quad $\max \widetilde{T}_{i,j} \le \min \widetilde{T}_{i+1,j}$;
\item $s \in [n]$ appears at most once in each column;
\item $s' \in [n']$ appears at most once in each row;
\item $\widetilde{T}_{i,i} \subseteq [n]$.
\end{enumerate}
\end{dfn}
We abbreviate shifted semistandard set-valued tableaux as shifted set-valued tableaux and denote the set of shifted set-valued tableaux of shape $\mu$ by ${\rm SSVT}_P(\mu)$.
When specifying the number of variables $n$, we use the notation ${\rm SSVT}_P(\mu,n)$. We consider only the cases where ${\rm SSVT}_P(\mu,n) \neq \emptyset$.

The weight of a shifted set-valued tableau $\widetilde{T}$ is defined as $\widetilde{\omega}_s(\widetilde{T}) \coloneqq |\{ s',s \in [n',n] \mid s',s \in \widetilde{T} \}|$. 
The weight $\widetilde{\omega}(\widetilde{T})$ and the corresponding monomial $x^{\tilde{\omega}(\widetilde{T})}$ are defined analogously to those in~\eqref{weight} and~\eqref{mono}.
Following~\cite{IkedaNaruse}, the \textit{$K$-theoretic Schur $P$-functions} are defined as a generating series over these tableaux:
\begin{align}\label{GP}
GP_{\mu}=GP_\mu(x \mid \beta)\coloneqq \sum_{\widetilde{T} \in {\rm SSVT}_P(\mu,n)}\beta^{|\widetilde{T}|-|\mu|}x^{\widetilde{\omega}(\widetilde{T})},
\end{align}
where $|\widetilde{T}|$ represents the total number of positive integers and positive half-integers assigned in $\widetilde{T}$. 
Taking $\beta = 0$ in~\eqref{GP} yields the Schur $P$-function.
The combinatorial properties of these functions and shifted tableaux are developed in \cite{Stembridge1989}.

\begin{case}
Let $\mu = \tilde{\delta}_3 = (2,1)={\scalebox{0.4}{{\raisebox{7pt}[0pt][0pt]{\ytableaushort{\ \ ,\none\ }}}}}$ and $n=3$. All tableaux in ${\rm SSVT}_P({\scalebox{0.3}{{\raisebox{9pt}[0pt][0pt]{\ytableaushort{\ \ ,\none\ }}}}},3)$ are enumerated below:\vspace{3mm}
\begin{align*}
&{\scalebox{1.0}{{\raisebox{1pt}[0pt][0pt]{\ytableaushort{11,\none2}}}}},\ 
{\scalebox{1.0}{{\raisebox{1pt}[0pt][0pt]{\ytableaushort{11,\none3}}}}},\ 
{\scalebox{1.0}{{\raisebox{1pt}[0pt][0pt]{\ytableaushort{12,\none3}}}}},\ 
{\scalebox{1.0}{{\raisebox{1pt}[0pt][0pt]{\ytableaushort{22,\none3}}}}},\ 
{\scalebox{1.0}{{\raisebox{1pt}[0pt][0pt]{\ytableaushort{1{\ \!2'},\none2}}}}},\ 
{\scalebox{1.0}{{\raisebox{1pt}[0pt][0pt]{\ytableaushort{1{\ \!2'},\none3}}}}},\ 
{\scalebox{1.0}{{\raisebox{1pt}[0pt][0pt]{\ytableaushort{1{\ \!3'},\none3}}}}},\ 
{\scalebox{1.0}{{\raisebox{1pt}[0pt][0pt]{\ytableaushort{2{\ \!3'},\none3}}}}},\\\\\\
&{\raisebox{0pt}[0pt][0pt]{\ytableaushort{1{12'},\none2}}},\ {\raisebox{0pt}[0pt][0pt]{\ytableaushort{1{12'},\none3}}},\ {\raisebox{0pt}[0pt][0pt]{\ytableaushort{1{2'2},\none3}}},\ {\raisebox{0pt}[0pt][0pt]{\ytableaushort{1{12},\none{3}}}},\ {\raisebox{0pt}[0pt][0pt]{\ytableaushort{1{13'},\none3}}},\ {\raisebox{0pt}[0pt][0pt]{\ytableaushort{1{23'},\none3}}},{\raisebox{0pt}[0pt][0pt]{\ytableaushort{1{1},\none{23}}}},\ {\raisebox{0pt}[0pt][0pt]{\ytableaushort{1{\ \!2'},\none{23}}}},\\\\\\
&{\raisebox{0pt}[0pt][0pt]{\ytableaushort{1{2'3'},\none{3}}}},\ {\raisebox{0pt}[0pt][0pt]{\ytableaushort{2{23'},\none{3}}}},\ {\raisebox{0pt}[0pt][0pt]{\ytableaushort{{12}2,\none3}}},\ {\raisebox{0pt}[0pt][0pt]{\ytableaushort{{12}{\ \!3'},\none3}}},{\raisebox{0pt}[0pt][0pt]{\ytableaushort{{1}{12'},\none{23}}}},\ {\raisebox{0pt}[0pt][0pt]{\ytableaushort{1{{\scalebox{0.8}{1$2^{\prime}$$3^{\prime}$}}},\none{3}}}},\ {\raisebox{0pt}[0pt][0pt]{\ytableaushort{1{{\scalebox{0.8}{$2^{\prime}$23}}},\none{3}}}},\ {\raisebox{0pt}[0pt][0pt]{\ytableaushort{{12}{23'},\none3}}},\\\\\\
&{\raisebox{0pt}[0pt][0pt]{\ytableaushort{1{{\scalebox{0.8}{1$2^{\prime}$2}}},\none{3}}}},\ {\raisebox{0pt}[0pt][0pt]{\ytableaushort{1{{\scalebox{0.8}{12$3^{\prime}$}}},\none{3}}}},\ {\raisebox{0pt}[0pt][0pt]{\ytableaushort{1{{\scalebox{0.63}{1$2^{\prime}$2$3^{\prime}$}}},\none{3}}}}.\\
\end{align*}
The function $GP_{\scalebox{0.2}{{\raisebox{8pt}[0pt][0pt]{\ytableaushort{\ \ ,\none\ }}}}}(x_1,x_2,x_3 \mid \beta)
= GP_{\scalebox{0.2}{{\raisebox{8pt}[0pt][0pt]{\ytableaushort{\ \ ,\none\ }}}}}$ is
\begin{align}\label{exGP}
GP_{\scalebox{0.2}{{\raisebox{8pt}[0pt][0pt]{\ytableaushort{\ \ ,\none\ }}}}}
=&\ P_{\scalebox{0.2}{{\raisebox{8pt}[0pt][0pt]{\ytableaushort{\ \ ,\none\ }}}}}
+ \beta(x_1^2x_2^2 + x_1^2x_3^2 + x_2^2x_3^2 + 3x_1^2x_2x_3 + 3x_1x_2^2x_3 + 3x_1x_2x_3^2) \notag\\
&+ \beta^2(2x_1^2x_2^2x_3 + 2x_1^2x_2x_3^2 + 2x_1x_2^2x_3^2) + \beta^3x_1^2x_2^2x_3^2.
\end{align}
\end{case}
By comparing~\eqref{exG} and~\eqref{exGP}, we find that the Grothendieck polynomial and the $K$-theoretic Schur $P$-function are equal for staircase partitions and strict partitions.
This extends to the general case, established by Lewis and Marberg:
\begin{thm}[\!\!\cite{LewisMarberg}]\label{prop:LewisMarberg}
For a partition $\lambda$ and a strict partition $\mu$, it follows that $G_\lambda=GP_\mu$ if and only if $\lambda=\delta_k$ and $\mu=\tilde{\delta}_k$.
\end{thm}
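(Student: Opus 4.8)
The proof splits into necessity and sufficiency, and I would handle the two directions by quite different means. The observation driving necessity is that setting the deformation parameter $\beta=0$ in~\eqref{GG} and~\eqref{GP} collapses each generating function onto its lowest-degree part: as recorded in the paper, $G_\lambda(x\mid 0)=s_\lambda(x)$ and $GP_\mu(x\mid 0)=P_\mu(x)$, the Schur polynomial and the Schur $P$-function respectively. Thus an equality $G_\lambda=GP_\mu$ forces $s_\lambda=P_\mu$ as symmetric functions. The plan is then to invoke the classical characterization that a single Schur function equals a single Schur $P$-function only in the staircase case. Concretely, the Schur expansion $P_\mu=\sum_\nu g_\mu^\nu s_\nu$ has non-negative integer coefficients counting shifted Littlewood--Richardson fillings, and this sum degenerates to a single term with coefficient $1$ precisely when $\mu=\tilde{\delta}_k$, in which case $P_{\tilde{\delta}_k}=s_{\delta_k}$. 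Matching the unique surviving Schur term on each side then pins down $\lambda=\delta_k$ and $\mu=\tilde{\delta}_k$ simultaneously, giving necessity.

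For sufficiency one must upgrade this to the full $K$-theoretic identity $G_{\delta_k}=GP_{\tilde{\delta}_k}$. I would produce a bijection
\[
\Phi\colon {\rm SVT}(\delta_k,n)\longrightarrow {\rm SSVT}_P(\tilde{\delta}_k,n)
\]
that preserves both the weight and the total number of entries, i.e.\ $x^{\omega(T)}=x^{\widetilde{\omega}(\Phi(T))}$ and $|T|=|\Phi(T)|$ for every $T$. Granting such a $\Phi$, a term-by-term comparison of~\eqref{GG} and~\eqref{GP} yields the identity at once, since both the monomials and the powers of $\beta$ are matched under $\Phi$. The natural candidate for $\Phi$ is the set-valued lift of the classical content-preserving bijection underlying $s_{\delta_k}=P_{\tilde{\delta}_k}$: the self-conjugacy of the staircase lets one route the off-diagonal data of a straight filling into the primed letters permitted off the shifted diagonal, while the diagonal letters, which must stay unprimed by condition~(4) of Definition~\ref{ssvt}, correspond to the boxes fixed by the diagonal symmetry.

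The principal obstacle is verifying that $\Phi$ is a genuine bijection compatible with every constraint of Definition~\ref{ssvt}. One must check that the two semistandard inequalities defining ${\rm SVT}(\delta_k,n)$ transform, under the folding along the diagonal, exactly into the column-strictness of unprimed letters, the row-strictness of primed letters, and the diagonal condition $\widetilde{T}_{i,i}\subseteq[n]$. The set-valued feature is what makes this delicate: a single cell may carry several letters, so the rule assigning primes to the members of each cell must be shown to be consistent on both sides of the fold and to be invertible, all while keeping $|T|$ fixed so that the $\beta$-grading is respected. Once $\Phi$ is established for every $n$, the polynomials agree for all $n$ and hence as symmetric functions, completing the proof; the displayed computations~\eqref{exG} and~\eqref{exGP} of $G_{(2,1)}$ and $GP_{(2,1)}$ realize the base case $k=3$, where $\Phi$ matches the two tableau lists term for term.
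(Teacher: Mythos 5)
The paper does not prove this statement: it is imported verbatim from Lewis and Marberg~\cite{LewisMarberg} (the preceding comparison of \eqref{exG} and \eqref{exGP} is only an illustration of the case $k=3$), so there is no internal proof to measure your attempt against. Judged on its own terms, your outline has a sensible architecture but two genuine gaps. For necessity, the step from $s_\lambda=P_\mu$ to $\lambda=\delta_k$, $\mu=\tilde{\delta}_k$ is precisely DeWitt's classification of when a Schur function equals a Schur $P$-function; citing it would be legitimate, but your justification --- that the Stembridge expansion $P_\mu=\sum_\nu g_\mu^\nu s_\nu$ ``degenerates to a single term with coefficient $1$ precisely when $\mu=\tilde{\delta}_k$'' --- is a restatement of the claim rather than an argument. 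Positivity of the $g_\mu^\nu$ alone does not identify which $\mu$ admit a single surviving term; one must actually exhibit, for every non-staircase $\mu$, either two distinct $\nu$ with $g_\mu^\nu>0$ or a $\nu$ with $g_\mu^\nu\ge 2$ (already $P_{(2)}=s_{(2)}+s_{(1,1)}$ shows the phenomenon, but the general case needs a proof).

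For sufficiency, the argument is missing at its crux. You never define the map $\Phi$; you describe a ``natural candidate'' (folding along the diagonal and routing off-diagonal entries into primed letters) and then correctly identify that verifying compatibility with Definition~\ref{ssvt} and preservation of $|T|$ is ``the principal obstacle'' --- but that obstacle is the entire content of this direction. In the set-valued setting the naive fold is genuinely delicate: a sub-diagonal cell of $\delta_k$ may carry several letters that must be redistributed as primed entries into a single shifted cell while keeping the total count $|T|$ and the inequalities $\max\le\min$ intact, and neither well-definedness nor invertibility is evident. (Lewis and Marberg's own derivation of $G_{\delta_k}=GP_{\tilde{\delta}_k}$ proceeds by algebraic means through their theory of enriched set-valued $P$-partitions, not by such a folding bijection.) As written, the proposal reduces the theorem to two unproven assertions --- DeWitt's classification and the existence of $\Phi$ --- so it should be regarded as a plan rather than a proof.
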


This equality implies that the recurrence from Corollary~\ref{stairG} holds for the $K$-theoretic Schur $P$-functions as well, leading to a unified representation via the Jacobi polynomial.

\begin{prop}\label{stairGP}
The $K$-theoretic Schur $P$-functions satisfy the following unified recurrence relation:
\begin{align} \label{eq:GP_recurrence_jacobi}
\frac{GP_{\tilde{\delta}_{k+1}}(1,\dots,1\mid \beta)}{GP_{\tilde{\delta}_k}(1,\dots,1\mid \beta)} = \frac{2^{2k} (k!)^2}{(2k)!} P_k^{(\alpha, \beta)}(-1) \times \frac{F^{(n)}_{\delta_{k+1}}(-\beta)}{F^{(n)}_{\delta_k}(-\beta)},
\end{align}
where $\alpha = \frac{n-k-1}{2}$ and $\beta = \frac{-n-k-1}{2}$.
\end{prop}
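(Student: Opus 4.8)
The plan is to reduce the statement to the Grothendieck recurrence already established in Proposition~\ref{stairG}, exploiting the coincidence of the two families of functions on staircase shapes. The essential input is the Lewis--Marberg identity (Theorem~\ref{prop:LewisMarberg}), which asserts that $G_{\delta_k}=GP_{\tilde{\delta}_k}$ for every $k$. I would emphasize at the outset that this is a genuine equality in the polynomial ring, holding identically in the symmetric variables $x_1,\dots,x_n$ and in the formal $K$-theory parameter (the parameter denoted $\beta$ in~\eqref{GG} and~\eqref{GP}, which must be kept notationally distinct from the Jacobi parameter $\beta=\frac{-n-k-1}{2}$). Because it is a polynomial identity rather than a numerical coincidence, it is preserved under any specialization of the variables.

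First I would specialize both sides of $G_{\delta_k}=GP_{\tilde{\delta}_k}$ at $x_1=\cdots=x_n=1$, obtaining
\[
GP_{\tilde{\delta}_k}(1,\dots,1\mid\beta)=G_{\delta_k}(1,\dots,1\mid\beta)
\]
for each $k$, and likewise for $k+1$. Forming the quotient of the $(k+1)$ relation by the $k$ relation then identifies the left-hand side of~\eqref{eq:GP_recurrence_jacobi} with the Grothendieck ratio
\[
\frac{GP_{\tilde{\delta}_{k+1}}(1,\dots,1\mid\beta)}{GP_{\tilde{\delta}_k}(1,\dots,1\mid\beta)}
=\frac{G_{\delta_{k+1}}(1,\dots,1\mid\beta)}{G_{\delta_k}(1,\dots,1\mid\beta)}.
\]

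Next I would invoke Proposition~\ref{stairG} directly to rewrite the Grothendieck ratio on the right as $\frac{2^{2k}(k!)^2}{(2k)!}\,P_k^{(\alpha,\beta)}(-1)\times\frac{F^{(n)}_{\delta_{k+1}}(-\beta)}{F^{(n)}_{\delta_k}(-\beta)}$, which is exactly the claimed expression; this completes the argument. I do not anticipate a genuine obstacle here, since the statement is a formal corollary of the two cited results. The only point requiring care is to make explicit that the Lewis--Marberg equality is used as an identity of polynomials, so that evaluating all variables at $1$ is legitimate, and to keep the two distinct roles of $\beta$ cleanly separated throughout.
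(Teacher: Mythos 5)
Your proposal is correct and follows essentially the same route as the paper: it invokes the Lewis--Marberg identity $G_{\delta_k}=GP_{\tilde{\delta}_k}$ to identify the two ratios and then substitutes the recurrence from Proposition~\ref{stairG}. Your added remarks on specializing a polynomial identity and on keeping the two roles of $\beta$ separate are sensible clarifications but do not change the argument.
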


\begin{proof}
The statement follows directly from Theorem~\ref{prop:LewisMarberg}. Since $\delta_k$ and $\delta_{k+1}$ are the only partitions satisfying the identity $GP_\mu = G_\lambda$ for the respective strict partitions $\tilde{\delta}_k$ and $\tilde{\delta}_{k+1}$, we have
\begin{align*}
GP_{\tilde{\delta}_k}=G_{\delta_k},\quad GP_{\tilde{\delta}_{k+1}}=G_{\delta_{k+1}}.
\end{align*}
Substituting these identities into the recurrence relation for the $K$-theoretic Grothendieck polynomials given in Proposition~\ref{stairG}, which utilizes the Jacobi polynomial evaluation from Theorem~\ref{thm:main}, we obtain the desired relation.
\end{proof}

\section{A combinatorial interpretation via excited Young diagrams}
\label{exc}

This section provides a combinatorial interpretation of the main formula
in terms of excited Young diagrams, in which the Jacobi polynomial appearing in our results naturally arises.
We adopt the conventions and notation of Ikeda and Naruse~\cite{IkedaNaruse2009}.
Under suitable specializations,
the resulting product expressions reduce to special values
of hypergeometric functions.

\subsection{Representation of Schur and Grothendieck polynomials by excited Young diagrams}
We first recall the representation of Schur polynomials. 
Define
\begin{align*}
D_n := \{ (i,j) \mid 1 \leq i \leq n, i \leq j \} \subseteq (\mathbb{Z}_{\geq 0})^2.
\end{align*}
 An element $(i,j) \in D_n$ is called a box. Let $D$ be a finite subset of $D_n$. An \textit{elementary excitation} is a transformation $D \to D' = (D \setminus \{ (i,j) \}) \cup \{ (i+1,j+1) \}$, provided that $(i, j) \in D$ and none of $(i + 1, j)$, $(i, j + 1)$, or $(i + 1, j + 1)$ belong to $D$. The condition $(i + 1, j) \notin D$ is vacuous for $i=j$.
Let $\mathcal{E}_n(\lambda)$ be the set of all diagrams obtainable from the initial diagram $D_0=D(\lambda)$ by a sequence of elementary excitations.

The \textit{Type A weight} of a diagram $D \in \mathcal{E}_n(\lambda)$ is defined as
\begin{align}\label{weyds}
\operatorname{\operatorname{wt}}(D) := \prod_{(i,j) \in D} x_i.
\end{align}
The weights $x_i$ are assigned to each row $i$, as illustrated below:\\[2mm]
\begin{align*}
{\raisebox{-20pt}[0pt][0pt]{Type A:\quad}}{\raisebox{1pt}[0pt][0pt]{\ytableaushort{{x_1}{x_1}{x_1}{x_1}{\cdots},{x_2}{x_2}{x_2}{x_2}{\cdots},{x_3}{x_3}{x_3}{x_3}{\cdots},{{\raisebox{-2.5pt}[0pt][0pt]\vdots}}{{\raisebox{-2.5pt}[0pt][0pt]\vdots}}{{\raisebox{-2.5pt}[0pt][0pt]\vdots}}{{\raisebox{-2.5pt}[0pt][0pt]\vdots}}{\cdots},{x_n}{x_n}{x_n}{x_n}{\cdots}}}}.\\[15mm]
\end{align*}
The Schur polynomial $s_\lambda(x)$ is the sum of weights over all excited Young diagrams:
\begin{align*}
s_{\lambda}(x) = \sum_{D \in \mathcal{E}_n(\lambda)}\operatorname{wt}(D).
\end{align*}
For instance, consider $\lambda = \delta_3= (2,1)={\scalebox{0.4}{{\raisebox{8pt}[0pt][0pt]{\ytableaushort{\ \ ,\ }}}}}$ and $n = 3$.
We encircle each box in the Young diagram and assign the weight $x_i$ to the boxes in each row $i$ of the set $D$ as follows:
\vspace{3mm}
\begin{align*}
\lambda={\raisebox{1pt}[0pt][0pt]{\ytableaushort{{\color{green!60!black}{\circ}}{\color{red}{\circ}},{\color{blue}{\circ}}}}},\quad {\raisebox{1pt}[0pt][0pt]{\ytableaushort{{x_1}{x_1}{x_1}{x_1}{\cdots},{x_2}{x_2}{x_2}{x_2}{\cdots},{x_3}{x_3}{x_3}{x_3}{\cdots}}}}.\\[3mm]
\end{align*}
We list all excited Young diagrams $D \in \mathcal{E}_3({\scalebox{0.25}{{\raisebox{11pt}[0pt][0pt]{\ytableaushort{\ \ ,\ }}}}})$ as shown below:
\vspace{2mm}
\begin{align}\label{eyds}
\begin{split}
&{\raisebox{1pt}[0pt][0pt]{\ytableaushort{{\color{green!60!black}{\circ}}{\color{red}{\circ}}\ \ ,{\color{blue}{\circ}}\ \ \ ,\ \ \ \ }},\ }\ 
{\raisebox{1pt}[0pt][0pt]{\ytableaushort{{\color{green!60!black}{\circ}}\ \ \ ,{\color{blue}{\circ}}\ {\color{red}{\circ}}\ ,\ \ \ \ }},\ }\ 
{\raisebox{1pt}[0pt][0pt]{\ytableaushort{{\color{green!60!black}{\circ}}\ \ \ ,{\color{blue}{\circ}}\ \  \ ,\ \ \ {\color{red}{\circ}}}},\ }\ 
{\raisebox{1pt}[0pt][0pt]{\ytableaushort{{\color{green!60!black}{\circ}}{\color{red}{\circ}}\ \ ,\ \ \ \ ,\ {\color{blue}{\circ}}\ \ }},}\\[15mm]
&{\raisebox{1pt}[0pt][0pt]{\ytableaushort{{\color{green!60!black}{\circ}}\ \ \ ,\ \ {\color{red}{\circ}}\ ,\ {\color{blue}{\circ}}\ \ }},\ }\ 
{\raisebox{1pt}[0pt][0pt]{\ytableaushort{{\color{green!60!black}{\circ}}\ \ \ ,\ \ \ \ ,\ {\color{blue}{\circ}}\ {\color{red}{\circ}}}},\ }\ 
{\raisebox{1pt}[0pt][0pt]{\ytableaushort{\ \ \ \ ,\ {\color{green!60!black}{\circ}}{\color{red}{\circ}}\ ,\ {\color{blue}{\circ}}\ \ }},\ }\ 
{\raisebox{1pt}[0pt][0pt]{\ytableaushort{\ \ \ \ ,\ {\color{green!60!black}{\circ}}\ \ ,\ {\color{blue}{\circ}}\ {\color{red}{\circ}}}}.}\ \\[10mm]
\end{split}
\end{align}
Summing the weights given by \eqref{weyds} over these excited Young diagrams yields the corresponding Schur polynomial
\begin{align*}
s_{{\scalebox{0.23}{{\raisebox{2pt}[0pt][0pt]{\ytableaushort{\ \ ,\ }}}}}}(x_1,x_2,x_3)&= \sum_{D\in \mathcal{E}_3({\scalebox{0.20}{{\raisebox{10pt}[0pt][0pt]{\ytableaushort{\ \ ,\ }}}}})}\!\!\!\!\operatorname{wt}(D)=x_1^2x_2+x_1x_2^2+\cdots+x_2x_3^2.
\end{align*}
This framework extends to the Grothendieck polynomial.
For a diagram $D$, let $\mathcal{B}(D)$ be the set of boxes $(i,j) \notin D$ that satisfy a specific set of the following conditions with respect to the boxes in $D$:
\begin{align*}
& (1) \quad (i, j) \notin D; \nonumber \\
& (2) \quad \text{ A positive integer } k \text{ exists} \text{ such that } (i + k, j + k) \in D; \nonumber \\
& (3) \quad (i + s, j + s) \notin D \text{ for } 1 \leq s < k; \nonumber \\
& (4) \quad (i + s, j + s + 1) \notin D \text{ for } 0 \leq s < k; \nonumber \\
& (5) \quad (i + s - 1, j + s) \notin D \text{ for } 0 \leq s < k \text{ (this condition is excluded if } i = j). 
\end{align*} 
The weight for $G_\lambda$ is then defined as
\begin{align}\label{weiA}
\operatorname{wt}^{\mathrm{A}}(D) := \prod_{(i,j) \in D} x_i \prod_{(\tilde{i},\tilde{j}) \in \mathcal{B}(D)}(1+\beta x_{\tilde{i}}).
\end{align}
The Grothendieck polynomial $G_{\lambda}(x \mid \beta)$ is given by
\begin{align*}
G_{\lambda}(x \mid \beta) = \sum_{D \in \mathcal{E}_n(\lambda)}\operatorname{wt}^{\mathrm{A}}(D).
\end{align*}
\begin{case}\label{exeydG}
Let $\lambda = \delta_3= (2,1)={\scalebox{0.4}{{\raisebox{8pt}[0pt][0pt]{\ytableaushort{\ \ ,\ }}}}}$ and $n = 3$.
The corresponding excited Young diagrams are listed below:\vspace{3mm}
\begin{align}\label{eydG}
\begin{split}
{\raisebox{1pt}[0pt][0pt]{\ytableaushort{{\color{green!60!black}{\circ}}{\color{red}{\circ}}\ \ ,{\color{blue}{\circ}}\ \ \ ,\ \ \ \ }},\ }\ 
{\raisebox{1pt}[0pt][0pt]{\ytableaushort{{\color{green!60!black}{\circ}}{\color{red}{\beta}}\ \ ,{\color{blue}{\circ}}\ {\color{red}{\circ}}\ ,\ \ \ \ }},\ }\ 
{\raisebox{1pt}[0pt][0pt]{\ytableaushort{{\color{green!60!black}{\circ}}{\color{red}{\beta}}\ \ ,{\color{blue}{\circ}}\ {\color{red}{\beta}}\ ,\ \ \ {\color{red}{\circ}}}},\ }\ 
{\raisebox{1pt}[0pt][0pt]{\ytableaushort{{\color{green!60!black}{\circ}}{\color{red}{\circ}}\ \ ,{\color{blue}{\beta}}\ \ \ ,\ {\color{blue}{\circ}}\ \ }},}\ \\[15mm]
{\raisebox{1pt}[0pt][0pt]{\ytableaushort{{\color{green!60!black}{\circ}}{\color{red}{\beta}}\ \ ,{\color{blue}{\beta}}\ {\color{red}{\circ}}\ ,\ {\color{blue}{\circ}}\ \ }},\ }\ 
{\raisebox{1pt}[0pt][0pt]{\ytableaushort{{\color{green!60!black}{\circ}}{\color{red}{\beta}}\ \ ,{\color{blue}{\beta}}\ {\color{red}{\beta}}\ ,\ {\color{blue}{\circ}}\ {\color{red}{\circ}}}},\ }\ 
{\raisebox{1pt}[0pt][0pt]{\ytableaushort{{\color{green!60!black}{\beta}}\ \ \ ,\ {\color{green!60!black}{\circ}}{\color{red}{\circ}}\ ,\ {\color{blue}{\circ}}\ \ }},\ }\ 
{\raisebox{1pt}[0pt][0pt]{\ytableaushort{{\color{green!60!black}{\beta}}\ \ \ ,\ {\color{green!60!black}{\circ}}{\color{red}{\beta}}\ ,\ {\color{blue}{\circ}}\ {\color{red}{\circ}}}},}\ \\[10mm]
\end{split}
\end{align}
where the boxes marked with the beta $\scalebox{0.7}{{\raisebox{-1pt}[0pt][0pt]{\ytableaushort{{\beta}}}}}$ represent the elements of $\mathcal{B}^{\mathrm{A}}(D)$.
Using this weight~\eqref{weiA}, consider the polynomials corresponding to the excited Young diagrams of~\eqref{eydG}, and taking their sum yields the following Grothendieck polynomial
\begin{align*}
G_{{\scalebox{0.23}{{\raisebox{2pt}[0pt][0pt]{\ytableaushort{\ \ ,\ }}}}}}(x_1,x_2,x_3 \mid \beta) &= \sum_{D\in \mathcal{E}_n^{\mathrm{A}}({\scalebox{0.20}{{\raisebox{10pt}[0pt][0pt]{\ytableaushort{\ \ ,\ }}}}})}\!\!\!\!\operatorname{wt}^{\mathrm{A}}(D) \notag \\
&=x_1^2x_2+x_1x_2^2{\color{red}(1+\beta x_1)}+\cdots+x_2x_3^2{\color{green!60!black}(1+\beta x_1)}{\color{red}(1+\beta x_2)}.
\end{align*}
\end{case}

\begin{rem}
We emphasize that Grothendieck polynomials defined via excited Young diagrams
are well defined for any number of variables $n$, including the case $n<k$.
In particular, the combinatorial definition itself imposes no restriction on $n$.
Throughout this section, additional assumptions on $n$ are introduced only when evaluating specific specializations of weights.
Such assumptions are purely technical and do not affect the validity of the underlying combinatorial definition.
An analogous remark applies to $K$-theoretic Schur $P$-functions associated with shifted shapes,
which will be defined in a later subsection.
\end{rem}

\subsection{Representation of $K$-theoretic Schur $P$-functions by excited Young diagrams}
A similar representation exists for $GP_\mu(x \mid \beta)$~\cite{IkedaNaruse}. 
The set of excited Young diagrams $\mathcal{E}_n(\mu)$ is the same, but the weight function is different. The \textit{Type B weight} of a box $(i,j)$ is defined as
\begin{align*}
  \operatorname{wt}^{\mathrm{B}}(i,j) :=
  \begin{cases}
    x_i & (j=i \text{ or } j>n), \\
    x_i \oplus x_j & (i < j \leq n),
  \end{cases}
\end{align*}
where $x_i \oplus x_j := x_i + x_j + \beta x_ix_j$. 
The assignment of these weights is shown below:
\begin{table}[H]
    \centering
  Type B:\quad
    \label{tab:hogehoge}
    \begin{tabular}{ccccccc}
        \hline
        \multicolumn{1}{|c|}{$\ \ \ \ x_1\ \ \ $} & \multicolumn{1}{|c|}{$x_1\oplus x_2$} & \multicolumn{1}{|c|}{$x_1\oplus x_3$} & \multicolumn{1}{|c|}{$\ \ \cdots\ \ $} & \multicolumn{1}{|c|}{$x_1\oplus x_n$} & \multicolumn{1}{|c|}{$\ \ \ x_1\ \ \ $}  & \multicolumn{1}{|c|}{$\cdots$}\\ \cline{1-7}
         & \multicolumn{1}{|c|}{$x_2$} &  \multicolumn{1}{|c|}{$x_2\oplus x_3$}  & \multicolumn{1}{|c|}{$\cdots$} & \multicolumn{1}{|c|}{$x_2\oplus x_n$} & \multicolumn{1}{|c|}{$x_2$}  & \multicolumn{1}{|c|}{$\ \ \cdots\ \ $} \\ \cline{2-7}
 & & \multicolumn{1}{|c|}{$x_3$} & \multicolumn{1}{|c|}{$\cdots$} & \multicolumn{1}{|c|}{$x_3\oplus x_n$} & \multicolumn{1}{|c|}{$x_3$}  & \multicolumn{1}{|c|}{$\cdots$} \\ \cline{3-7}
 & & & \multicolumn{1}{|c|}{$\ddots$} & \multicolumn{1}{|c|}{$\vdots$} & \multicolumn{1}{|c|}{$\vdots$} & \multicolumn{1}{|c|}{$\vdots$} \\ \cline{4-7}
 & & & & \multicolumn{1}{|c|}{$x_n$} & \multicolumn{1}{|c|}{$x_n$} & \multicolumn{1}{|c|}{$\cdots$} \\ \cline{5-7}
    \end{tabular}.
\end{table}
This defines a weight $\operatorname{wt}^{\mathrm{B}}(D)$ for each diagram $D$, and the $K$-theoretic Schur $P$-function is given by
\begin{align}\label{eydGP}
GP_{\mu}(x \mid \beta) = \sum_{D\in \mathcal{E}_n(\mu)}\operatorname{wt}^{\mathrm{B}}(D).
\end{align}
\begin{case}\label{exeydGP}
Take $\mu = \tilde{\delta}_3= (2,1)={\scalebox{0.4}{{\raisebox{8pt}[0pt][0pt]{\ytableaushort{\ \ ,\none\ }}}}}$ and $n=3$.
The enumeration of excited Young diagrams is presented as shown below:\vspace{3mm}
\begin{align*}
\scalebox{1.0}{{\raisebox{1pt}[0pt][0pt]{\ytableaushort{{\color{green!60!black}{\circ}}{\color{red}{\circ}}\ ,\none{\color{blue}{\circ}}\ ,\none\none\ }},\ }}\ 
\scalebox{1.0}{{\raisebox{1pt}[0pt][0pt]{\ytableaushort{{\color{green!60!black}{\circ}}{\color{red}{\circ}}\ ,\none {\color{blue}\beta}\ ,\none\none{\color{blue}{\circ}}}},\ }}\ 
\scalebox{1.0}{{\raisebox{1pt}[0pt][0pt]{\ytableaushort{{\color{green!60!black}{\circ}}{\color{red}\beta}\ ,\none\ {\color{red}{\circ}},\none\none{\color{blue}{\circ}}}},\ }}\ 
\scalebox{1.0}{{\raisebox{1pt}[0pt][0pt]{\ytableaushort{{\color{green!60!black}\beta}\ \ ,\none{\color{green!60!black}{\circ}}{\color{red}{\circ}},\none\none{\color{blue}{\circ}}}}.\ }}\ \\[5mm]
\end{align*}
The following polynomial is obtained from~\eqref{eydGP}:
\begin{align*}
GP_{{\scalebox{0.23}{{\raisebox{5pt}[0pt][0pt]{\ytableaushort{\ \ ,\none\ }}}}}}(x_1,x_2,x_3 \mid \beta) =& x_1(x_1\oplus x_2)x_2 + x_1(x_1\oplus x_2)x_3{\color{blue}(1+\beta x_2)}\\
&+ x_1(x_2\oplus x_3)x_3{\color{red}(1+\beta(x_1\oplus x_2))}\\
&+x_2(x_2\oplus x_3)x_3{\color{green!60!black}(1+\beta x_1)}.
\end{align*}
\end{case}

Examples~\ref{exeydG} and~\ref{exeydGP} show that the number of excited Young diagrams for $GP_{\tilde{\delta}_k}$ (Type~B) is smaller than the number for $G_{\delta_k}$ (Type~A). This difference reflects a more compact combinatorial structure in the shifted setting.
Under the specialization $x_i=1$ ($1 \le i \le n$) and $\beta=1$, additional combinatorial factors emerge; most notably, powers of $3$ appear in the ratios of the cardinalities of set-valued tableaux. 
For instance, we have
\begin{align*}
\frac{|{\rm SVT}(\delta_4,4)|}{|{\rm SVT}(\delta_3,4)|}
   &= \frac{|{\rm SSVT}_P(\tilde{\delta}_4,4)|}{|{\rm SSVT}_P(\tilde{\delta}_3,4)|}
   = \frac{729}{159}
   = \frac{27 \times 3^3}{53 \times 3},\\
\frac{|{\rm SVT}(\delta_5,6)|}{|{\rm SVT}(\delta_4,6)|}
   &= \frac{|{\rm SSVT}_P(\tilde{\delta}_5,6)|}{|{\rm SSVT}_P(\tilde{\delta}_4,6)|}
   = \frac{2479329}{134865}
   = \frac{3401 \times 3^{6}}{4995 \times 3^3}.
\end{align*}
These observations suggest that the factor $3^{\binom{k-1}{2}}$ comes from the combinatorial structure of $\tilde{\delta}_k$.
Although the total power of $3$ in $|{\rm SSVT}_P(\tilde{\delta}_k, n)|$ depends on $n$, the factor $3^{\binom{k-1}{2}}$ is due to the off-diagonal boxes that are fixed in all excited diagrams. The following result makes this precise.

\begin{prop}\label{prop:three-power-count}
Let $\tilde\delta_k=(k-1,k-2,\dots,1)$ and $n \ge k-1$. 
Under the specialization $x_i=1$ and $\beta=1$, $|{\rm SSVT}_P(\tilde{\delta}_k,n)|$ is divisible by $3^{\binom{k-1}{2}}$.
\end{prop}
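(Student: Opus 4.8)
The plan is to read off the power of $3$ directly from the excited Young diagram expansion \eqref{eydGP}. Setting $x_i=1$ and $\beta=1$ in \eqref{GP} gives $|{\rm SSVT}_P(\tilde\delta_k,n)|=GP_{\tilde\delta_k}(1,\dots,1\mid1)$, so by \eqref{eydGP} this count is the sum over $D\in\mathcal E_n(\tilde\delta_k)$ of the specialized weights $\operatorname{wt}^{\mathrm B}(D)$. Under this specialization the Type B weight of a box $(i,j)$ equals $x_i\oplus x_j=3$ exactly when $i<j\le n$, and equals $1$ on the diagonal ($j=i$) and for $j>n$; moreover each distinguished $\beta$-box contributes a factor $1+\operatorname{wt}^{\mathrm B}(\cdot)\in\{2,4\}$, coprime to $3$. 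Hence the $3$-adic valuation of each summand equals the number of boxes $(i,j)\in D$ with $i<j\le n$, and since a sum of multiples of $3^m$ is again a multiple of $3^m$, it suffices to show that every such $D$ has at least $\binom{k-1}{2}$ of these boxes.

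Next I would observe that an elementary excitation $(i,j)\mapsto(i+1,j+1)$ preserves the content $d=j-i$. Thus the number of off-diagonal boxes (those with $d\ge1$) is constant along $\mathcal E_n(\tilde\delta_k)$, equal to its value on $D(\tilde\delta_k)$, namely $\binom{k}{2}-(k-1)=\binom{k-1}{2}$. The desired bound is therefore equivalent to the assertion that \emph{no off-diagonal box escapes to column $j>n$}, i.e.\ that every content-$d$ box keeps row index at most $n-d$.

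The core of the proof is an interleaving invariant between consecutive diagonals. For a diagram $D$ let $r_{d,1}<\dots<r_{d,m_d}$ denote the rows occupied by the content-$d$ boxes, with $m_d=k-1-d$. I would prove that $r_{d,\ell}<r_{d-1,\ell+1}$ for all $d\ge1$ and all admissible $\ell$. This is immediate on $D(\tilde\delta_k)$, where $r_{d,\ell}=\ell$. For the inductive step, consider an excitation of a content-$e$ box from row $i$ to row $i+1$, say of the $\ell_0$-th content-$e$ box so that $r_{e,\ell_0}=i$. The admissibility condition $(i+1,j)\notin D$ says precisely that no content-$(e-1)$ box sits in row $i+1$; combined with the old inequality $i<r_{e-1,\ell_0+1}$ this forces $r_{e-1,\ell_0+1}\ge i+2$, restoring the invariant for the moved index, while every inequality in which the content-$e$ rows appear on the larger side is only relaxed when $r_{e,\ell_0}$ increases. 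Telescoping $r_{d,m_d}<r_{d-1,m_{d-1}}<\dots<r_{0,m_0}\le n$ (using $m_d+1=m_{d-1}$ and that diagonal boxes satisfy $r_{0,\ell}\le n$) gives $r_{d,m_d}\le n-d$, so all content-$d$ boxes have column at most $n$. The hypothesis $n\ge k-1$ is what guarantees that $D(\tilde\delta_k)$, whose lowest box lies in row $k-1$, actually fits inside $D_n$, so the whole scheme is well posed.

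I expect the preservation of the interleaving invariant to be the only real difficulty, because it hinges on invoking exactly the right admissibility condition for an excitation and on checking that the two adjacent families of inequalities perturbed by a single move behave as claimed. By contrast, the content-invariance of excitations, the evaluation of the specialized weights, and the coprimality of the $\beta$-factors to $3$ are all routine.
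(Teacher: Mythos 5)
Your proof is correct and follows the same basic strategy as the paper: specialize the Type~B weights, observe that excitations preserve content so every excited diagram has exactly $\binom{k-1}{2}$ off-diagonal boxes, and argue that each such box contributes a factor of $3$. The difference is that the paper's proof stops at ``the excitation move preserves the off-diagonal condition, so the local weight remains a multiple of $3$,'' which is not by itself sufficient: the Type~B weight of a box $(i,j)$ is $x_i\oplus x_j=3$ only when $i<j\le n$, and degenerates to $x_i=1$ when $j>n$, so one must also rule out off-diagonal boxes being excited past column $n$. You identify this issue explicitly and close it with the interleaving invariant $r_{d,\ell}<r_{d-1,\ell+1}$ between consecutive content diagonals, whose preservation under an elementary excitation follows from exactly the admissibility condition $(i+1,j)\notin D$; telescoping down to the diagonal boxes (whose rows are bounded by $n$) then forces every content-$d$ box to have row at most $n-d$, hence column at most $n$. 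In short, your argument is a completed version of the paper's: it supplies the lemma needed to justify the step the paper asserts without proof, and the hypothesis $n\ge k-1$ enters in your version exactly where it should (so that the initial diagram fits in $D_n$ and the telescoping terminates at $r_{0,m_0}\le n$).
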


\begin{proof}
The shifted staircase shape $\tilde\delta_k$ contains $\binom{k-1}{2}$ off-diagonal boxes $(i,j)$ with $1 \le i < j \le k-1$. 
For each such box, the associated Type~B weight is $x_i \oplus x_j$, which evaluates to $3$ under the specialization $x_i=1$ and $\beta=1$.

In the excited Young diagram formula for the $K$-theoretic Schur $P$-function $GP_{\tilde\delta_k}$, the weight of each excited diagram factors as a product of local contributions. 
In particular, each off-diagonal box contributes a factor depending only on its excitation state. 
The excitation move $(i,j) \to (i+1, j+1)$ preserves the off-diagonal condition
(row index $<$ column index).
Therefore the local weight assigned to such a box remains a multiple of $3$
in every possible excited diagram.
Since this factorization holds for the weight of each excited diagram individually,
each term contributing to $|{\rm SSVT}_P(\tilde{\delta}_k, n)|$
is divisible by $3^{\binom{k-1}{2}}$.
\end{proof}

\begin{rem} \label{rem:rigidity_final}
Combining Theorem~\ref{thm:main} with Proposition~\ref{prop:three-power-count}, we analyze the ratio of the specializations. 
The value $|{\rm SSVT}_P(\tilde{\delta}_k, n)|$ is decomposed by extracting the factor $3^{\binom{k-1}{2}}$ identified in Proposition~\ref{prop:three-power-count}:
\begin{align*}
|{\rm SSVT}_P(\tilde{\delta}_k, n)| = 3^{\binom{k-1}{2}} \times R_k(n),
\end{align*}
where $R_k(n)$ denotes the remaining factor. 
The ratio between adjacent shifted staircase shapes simplifies to
\begin{align*} 
\frac{|{\rm SSVT}_P(\tilde{\delta}_{k+1}, n)|}{|{\rm SSVT}_P(\tilde{\delta}_k, n)|} 
&= \frac{3^{\binom{k}{2}}}{3^{\binom{k-1}{2}}} \times \frac{R_{k+1}(n)}{R_k(n)} \\[1ex]
&= 3^{k-1} \times \frac{R_{k+1}(n)}{R_k(n)}.
\end{align*}
According to the explicit formula in Theorem~\ref{thm:main}, the remaining ratio is evaluated as
\begin{align*}
\frac{R_{k+1}(n)}{R_k(n)} = \frac{2^{2k-2} ((k-1)!)^2}{(2k-2)!} P_{k-1}^{\left(\frac{n-k}{2}, \frac{-n-k}{2}\right)}(-1).
\end{align*}
This shows that the ratio is determined by the factor $3^{k-1}$ corresponding to the growth of off-diagonal boxes and a diagonal contribution expressed via a Jacobi polynomial.
\end{rem}

\begin{rem} \label{rem:Q-functions}
The appearance of powers of $3$ for staircase shapes suggests that similar combinatorial phenomena also arise for $K$-theoretic Schur $Q$-functions.
In this paper, we focus on $K$-theoretic Schur $P$-functions, applying Proposition~\ref{prop:LewisMarberg} to connect $G_\lambda$ and $GP_\mu$ in the staircase setting.
This recursive approach provides a new perspective on the enumerative properties of set-valued tableaux.
For instance, in \cite{FujiiNobukawaShimazaki2, NobukawaShimazaki}, it was proved that the number of set-valued tableaux and shifted set-valued tableaux of any shape is always odd.
The recurrence relations derived here are expected to be useful in studying such enumerative properties for other shapes.
\end{rem}

\section{Conclusion}\label{con}
In this paper, we established the ratio of hook-length products for staircase partitions in terms of special values of the Jacobi polynomial. 
By focusing on the ratios for adjacent staircase partitions, we identified the structural impact of adding a new row to a staircase shape as the difference in the parameters of the Jacobi polynomial. 
This connection was obtained by reducing the explicit calculations based on the hook-content formula to the properties of the Gauss hypergeometric function.

As an application to $K$-theory, we derived recurrence relations for the Grothendieck polynomials and $K$-theoretic Schur $P$-functions. 
These relations admit a combinatorial interpretation through excited Young diagrams. Within this setting, we demonstrated that the invariance of off-diagonal cells and the weight structure of Type B are the determining factors for the power of $3$ appearing in specific specializations.

\section*{Acknowledgments}
The author would like to express the deepest gratitude to Yasuhiko Yamada for continuous support of this work.
The author is also sincerely grateful to Genki Shibukawa for providing the original motivation for this study.
Special thanks are due to Taikei Fujii and Takahiko Nobukawa for their careful reading of the manuscript and for providing insightful comments.
This work was supported by JSPS KAKENHI Grant Number JP22H01116.


\begin{thebibliography}{99}

\bibitem{AAR1999}
G.~E.~Andrews, R.~Askey, and R.~Roy, 
\textit{Special Functions}, 
Cambridge Univ. Press, 1999.

\bibitem{Buch}
A.~S.~Buch, 
\textit{A Littlewood-Richardson rule for the K-theory of Grassmannians}, 
Acta Math., {\bf 189}(1), 37--78, 2002.

\bibitem{FRT}
J.~S.~Frame, G.~de B.~Robinson, and R.~M.~Thrall, 
\textit{The hook graphs of the symmetric group}, 
Canadian J. Math., {\bf 6}, 316--324, 1954.

\bibitem{FujiiNobukawaShimazaki1}
T.~Fujii, T.~Nobukawa, and T.~Shimazaki,
\textit{Special values of Grothendieck polynomials in terms of hypergeometric functions},
Hiroshima Math. J., {\bf 55}(2), 167--182, 2025.

\bibitem{FujiiNobukawaShimazaki2}
T.~Fujii, T.~Nobukawa, and T.~Shimazaki,
\textit{A recursive method for the oddness of the number of set-valued tableaux}, arXiv:2305.06740, to appear in Hiroshima Math. J.

\bibitem{GasperRahman2004}
G.~Gasper and M.~Rahman, 
\textit{Basic Hypergeometric Series}, 
2nd ed., Cambridge Univ. Press, 2004.

\bibitem{GesselStanton}
I.~M.~Gessel and D.~Stanton, 
\textit{Applications of $q$-Lagrange inversion to basic hypergeometric series}, 
Trans. Amer. Math. Soc., {\bf 277}, 173--201, 1983.

\bibitem{GrahamKreiman2013}
W.~Graham and V.~Kreiman, 
\textit{Excited Young diagrams, equivariant K-theory, and Schubert varieties}, 
arXiv:1302.3009, 2013.

\bibitem{Holman}
W.~J.~Holman III, 
\textit{Summation Theorems for Hypergeometric Series in U(n)}, 
SIAM J. Math. Anal., {\bf 11}(3), 523--532, 1980.

\bibitem{IkedaNaruse2009}
T.~Ikeda and H.~Naruse, 
\textit{Excited Young diagrams and equivariant Schubert calculus}, 
Trans. Amer. Math. Soc., {\bf 361}(10), 5193--5221, 2009.

\bibitem{IkedaNaruse}
T.~Ikeda and H.~Naruse,
\textit{$K$-theoretic analogues of factorial Schur $P$- and $Q$-functions},
Adv. Math., {\bf 243}, 22--66, 2013.

\bibitem{LascouxSchutzenberger}
A.~Lascoux and M.~P.~Schützenberger, 
\textit{Symmetrization of a tree of permutations}, 
Electron. J. Combin., {\bf 19}(4), P42, 2012.

\bibitem{Lascoux} 
A.~Lascoux, 
\textit{Anneau de Grothendieck de la vari\'{e}t\'{e} de drapeaux}, 
The Grothendieck Festschrift, Vol.~{\bf III}, Progr. Math., {\bf 88}, Birkh\"{a}user, Boston, 1--34, 1990.

\bibitem{LS} 
A.~Lascoux and M.~P.~Sch\"{u}tzenberger, 
\textit{Structure de Hopf de l'anneau de cohomologie et de
l'anneau de Grothendieck d'une vari\'{e}t\'{e} de drapeaux}, 
C.~R.~Acad. Sci. Paris S\'{e}r. I Math., {\bf 295}(11), 629--633, 1982.

\bibitem{LewisMarberg}
J.~B.~Lewis and E.~Marberg,
\textit{Enriched set-valued $P$-partitions and shifted stable Grothendieck polynomials},
Math. Z., {\bf 299}, 1929--1972, 2021.

\bibitem{Macdonald}
I.~G.~Macdonald, 
\textit{Symmetric Functions and Hall Polynomials}, 
2nd ed., Oxford Univ. Press, 1995.

\bibitem{MPP1}
A.~H.~Morales, I.~Pak, and G.~Panova, 
\textit{Hook formulas for skew shapes I. $q$-analogues and bijections}, 
J. Combin. Theory Ser. A, {\bf 154}, 350--405, 2018.

\bibitem{MPP2}
A.~H.~Morales, I.~Pak, and G.~Panova, 
\textit{Hook formulas for skew shapes II. Combinatorial proofs and enumerative applications}, 
SIAM J. Discrete Math., {\bf 31}(3), 1953--1989, 2017.

\bibitem{MPP3}
A.~H.~Morales, I.~Pak, and G.~Panova, 
\textit{Hook formulas for skew shapes III. Multivariate and product formulas}, 
Algebr. Comb., {\bf 2}(6), 815--861, 2019.

\bibitem{MotegiSakai} 
K.~Motegi and K.~Sakai, 
\textit{Vertex models, TASEP and Grothendieck polynomials}, 
J. Phys. A: Math. Theor., {\bf 46}(35), 355201, 2013.

\bibitem{NobukawaShimazaki}
T.~Nobukawa and T.~Shimazaki,
\textit{Special values of $K$-theoretic Schur $P$- and $Q$-functions},
arXiv:2407.12345, to appear in Math. J. Okayama Univ.

\bibitem{Noumi} 
M.~Noumi, 
\textit{Macdonald polynomials: Commuting Family of $q$-Difference Operators and Their Joint Eigenfunctions}, 
SpringerBriefs in Math. Phys., {\bf 50}, Springer, Singapore, 2023.

\bibitem{Slater}
L.~J.~Slater,
\textit{Generalized Hypergeometric Functions},
Cambridge Univ. Press, 1966.

\bibitem{StanleyEC2}
R.~P.~Stanley, 
\textit{Enumerative Combinatorics, Vol.~2}, 
Cambridge Stud. Adv. Math., {\bf 62}, Cambridge Univ. Press, Cambridge, 1999.

\bibitem{Stembridge1989}
J.~R.~Stembridge, 
\textit{Shifted tableaux and the projective representations of symmetric groups}, 
Adv. Math., {\bf 74}(1), 87--134, 1989.

\end{thebibliography}
\end{document}